\newtheorem{theorem}{Theorem}[section]
\newtheorem{proposition}[theorem]{Proposition}
\newtheorem{lemma}[theorem]{Lemma}
\newtheorem{definition}[theorem]{Definition}
\newtheorem{corollary}[theorem]{Corollary}
\newtheorem{remark}[theorem]{Remark}
\renewcommand{\theequation}{\thesection.\arabic{equation}}
\newenvironment{acknowledgement}{\smallskip{\sc Acknowledgement.}\rm}{\smallskip}
\numberwithin{equation}{section}
\newcounter{counterConstant}
\let\pdfoutput=\undefined\fi
\chardef\@x10\chardef\@xv60
\def\tcitime{
\def\@time{%
  \@minute\time\@hour\@minute\divide\@hour\@xv
  \ifnum\@hour<\@x 0\fi\the\@hour:%
  \multiply\@hour\@xv\advance\@minute-\@hour
  \ifnum\@minute<\@x 0\fi\the\@minute
  }}%
\def\x@hyperref#1#2#3{%
   \catcode`\~ = 12
   \catcode`\$ = 12
   \catcode`\_ = 12
   \catcode`\# = 12
   \catcode`\& = 12
   \y@hyperref{#1}{#2}{#3}%
}
\def\y@hyperref#1#2#3#4{%
   #2\ref{#4}#3
   \catcode`\~ = 13
   \catcode`\$ = 3
   \catcode`\_ = 8
   \catcode`\# = 6
   \catcode`\& = 4
}
\def\QCTOpt[#1]#2{%
  \def\QCTOptB{#1}
  \def\QCTOptA{#2}
}
\def\QCTNOpt#1{%
  \def\QCTOptA{#1}
  \let\QCTOptB\empty
}
\def\Qct{%
  \@ifnextchar[{%
    \QCTOpt}{\QCTNOpt}
}
\def\QCBOpt[#1]#2{%
  \def\QCBOptB{#1}%
  \def\QCBOptA{#2}%
}
\def\QCBNOpt#1{%
  \def\QCBOptA{#1}%
  \let\QCBOptB\empty
}
\def\Qcb{%
  \@ifnextchar[{%
    \QCBOpt}{\QCBNOpt}%
}
\def\PrepCapArgs{%
  \ifx\QCBOptA\empty
    \ifx\QCTOptA\empty
      {}%
    \else
      \ifx\QCTOptB\empty
        {\QCTOptA}%
      \else
        [\QCTOptB]{\QCTOptA}%
      \fi
    \fi
  \else
    \ifx\QCBOptA\empty
      {}%
    \else
      \ifx\QCBOptB\empty
        {\QCBOptA}%
      \else
        [\QCBOptB]{\QCBOptA}%
      \fi
    \fi
  \fi
}
\def\GRAPHICSPS#1{%
 \ifcase\GRAPHICSTYPE
   \special{ps: #1}%
 \or
   \special{language "PS", include "#1"}%
 \fi
}%
\def\graffile#1#2#3#4{%
    \bgroup
	   \@inlabelfalse
       \leavevmode
       \@ifundefined{bbl@deactivate}{\def~{\string~}}{\activesoff}%
        \raise -#4 \BOXTHEFRAME{%
           \hbox to #2{\raise #3\hbox to #2{\null #1\hfil}}}%
    \egroup
}%
\def\draftbox#1#2#3#4{%
 \leavevmode\raise -#4 \hbox{%
  \frame{\rlap{\protect\tiny #1}\hbox to #2%
   {\vrule height#3 width\z@ depth\z@\hfil}%
  }%
 }%
}%
\let\nographics=\@msidraft
\newif\ifwasdraft
\def\GRAPHIC#1#2#3#4#5{%
   \ifnum\@msidraft=\@ne\draftbox{#2}{#3}{#4}{#5}%
   \else\graffile{#1}{#3}{#4}{#5}%
   \fi
}
\def\addtoLaTeXparams#1{%
    \edef\LaTeXparams{\LaTeXparams #1}}%
\newif\ifBoxFrame \BoxFramefalse
\newif\ifOverFrame \OverFramefalse
\newif\ifUnderFrame \UnderFramefalse
\def\BOXTHEFRAME#1{%
   \hbox{%
      \ifBoxFrame
         \frame{#1}%
      \else
         {#1}%
      \fi
   }%
}
\def\doFRAMEparams#1{\BoxFramefalse\OverFramefalse\UnderFramefalse\readFRAMEparams#1\end}%
\def\readFRAMEparams#1{%
 \ifx#1\end%
  \let\next=\relax
  \else
  \ifx#1i\dispkind=\z@\fi
  \ifx#1d\dispkind=\@ne\fi
  \ifx#1f\dispkind=\tw@\fi
  \ifx#1t\addtoLaTeXparams{t}\fi
  \ifx#1b\addtoLaTeXparams{b}\fi
  \ifx#1p\addtoLaTeXparams{p}\fi
  \ifx#1h\addtoLaTeXparams{h}\fi
  \ifx#1X\BoxFrametrue\fi
  \ifx#1O\OverFrametrue\fi
  \ifx#1U\UnderFrametrue\fi
  \ifx#1w
    \ifnum\@msidraft=1\wasdrafttrue\else\wasdraftfalse\fi
    \@msidraft=\@ne
  \fi
  \let\next=\readFRAMEparams
  \fi
 \next
 }%
\def\IFRAME#1#2#3#4#5#6{%
      \bgroup
      \let\QCTOptA\empty
      \let\QCTOptB\empty
      \let\QCBOptA\empty
      \let\QCBOptB\empty
      #6%
      \parindent=0pt
      \leftskip=0pt
      \rightskip=0pt
      \setbox0=\hbox{\QCBOptA}%
      \@tempdima=#1\relax
      \ifOverFrame
          \typeout{This is not implemented yet}%
          \show\HELP
      \else
         \ifdim\wd0>\@tempdima
            \advance\@tempdima by \@tempdima
            \ifdim\wd0 >\@tempdima
               \setbox1 =\vbox{%
                  \unskip\hbox to \@tempdima{\hfill\GRAPHIC{#5}{#4}{#1}{#2}{#3}\hfill}%
                  \unskip\hbox to \@tempdima{\parbox[b]{\@tempdima}{\QCBOptA}}%
               }%
               \wd1=\@tempdima
            \else
               \textwidth=\wd0
               \setbox1 =\vbox{%
                 \noindent\hbox to \wd0{\hfill\GRAPHIC{#5}{#4}{#1}{#2}{#3}\hfill}\\%
                 \noindent\hbox{\QCBOptA}%
               }%
               \wd1=\wd0
            \fi
         \else
            \ifdim\wd0>0pt
              \hsize=\@tempdima
              \setbox1=\vbox{%
                \unskip\GRAPHIC{#5}{#4}{#1}{#2}{0pt}%
                \break
                \unskip\hbox to \@tempdima{\hfill \QCBOptA\hfill}%
              }%
              \wd1=\@tempdima
           \else
              \hsize=\@tempdima
              \setbox1=\vbox{%
                \unskip\GRAPHIC{#5}{#4}{#1}{#2}{0pt}%
              }%
              \wd1=\@tempdima
           \fi
         \fi
         \@tempdimb=\ht1
         \advance\@tempdimb by -#2
         \advance\@tempdimb by #3
         \leavevmode
         \raise -\@tempdimb \hbox{\box1}%
      \fi
      \egroup%
}%
\def\DFRAME#1#2#3#4#5{%
  \vspace\topsep
  \hfil\break
  \bgroup
     \leftskip\@flushglue
	 \rightskip\@flushglue
	 \parindent\z@
	 \parfillskip\z@skip
     \let\QCTOptA\empty
     \let\QCTOptB\empty
     \let\QCBOptA\empty
     \let\QCBOptB\empty
	 \vbox\bgroup
        \ifOverFrame 
           #5\QCTOptA\par
        \fi
        \GRAPHIC{#4}{#3}{#1}{#2}{\z@}%
        \ifUnderFrame 
           \break#5\QCBOptA
        \fi
	 \egroup
  \egroup
  \vspace\topsep
  \break
}%
\def\FFRAME#1#2#3#4#5#6#7{%
  \@ifundefined{floatstyle}
    {
     \begin{figure}[#1]%
    }
    {
	 \ifx#1h
      \begin{figure}[H]%
	 \else
      \begin{figure}[#1]%
	 \fi
	}
  \let\QCTOptA\empty
  \let\QCTOptB\empty
  \let\QCBOptA\empty
  \let\QCBOptB\empty
  \ifOverFrame
    #4
    \ifx\QCTOptA\empty
    \else
      \ifx\QCTOptB\empty
        \caption{\QCTOptA}%
      \else
        \caption[\QCTOptB]{\QCTOptA}%
      \fi
    \fi
    \ifUnderFrame\else
      \label{#5}%
    \fi
  \else
    \UnderFrametrue%
  \fi
  \begin{center}\GRAPHIC{#7}{#6}{#2}{#3}{\z@}\end{center}%
  \ifUnderFrame
    #4
    \ifx\QCBOptA\empty
      \caption{}%
    \else
      \ifx\QCBOptB\empty
        \caption{\QCBOptA}%
      \else
        \caption[\QCBOptB]{\QCBOptA}%
      \fi
    \fi
    \label{#5}%
  \fi
  \end{figure}%
 }%
\def\makeactives{
  \catcode`\"=\active
  \catcode`\;=\active
  \catcode`\:=\active
  \catcode`\'=\active
  \catcode`\~=\active
}
   \gdef\activesoff{%
      \def"{\string"}%
      \def;{\string;}%
      \def:{\string:}%
      \def'{\string'}%
      \def~{\string~}%
    }
\def\FRAME#1#2#3#4#5#6#7#8{%
 \bgroup
 \ifnum\@msidraft=\@ne
   \wasdrafttrue
 \else
   \wasdraftfalse%
 \fi
 \def\LaTeXparams{}%
 \dispkind=\z@
 \def\LaTeXparams{}%
 \doFRAMEparams{#1}%
 \ifnum\dispkind=\z@\IFRAME{#2}{#3}{#4}{#7}{#8}{#5}\else
  \ifnum\dispkind=\@ne\DFRAME{#2}{#3}{#7}{#8}{#5}\else
   \ifnum\dispkind=\tw@
    \edef\@tempa{\noexpand\FFRAME{\LaTeXparams}}%
    \@tempa{#2}{#3}{#5}{#6}{#7}{#8}%
    \fi
   \fi
  \fi
  \ifwasdraft\@msidraft=1\else\@msidraft=0\fi{}%
  \egroup
 }%
\def\TEXUX#1{"texux"}
\long\def\QQQ#1#2{%
     \long\expandafter\def\csname#1\endcsname{#2}}%
\long\def\QQA#1#2{}%
\def\QTR#1#2{{\csname#1\endcsname {#2}}}%
\def\EXPAND#1[#2]#3{}%
\def\NOEXPAND#1[#2]#3{}%
\def\LaTeXparent#1{}%
\def\ChildStyles#1{}%
\def\ChildDefaults#1{}%
\def\QTagDef#1#2#3{}%
  \providecommand{\UNICODE}[2][]{\protect\rule{.1in}{.1in}}
  \providecommand{\U}[1]{\protect\rule{.1in}{.1in}}
\def\QQfnmark#1{\footnotemark}
 \def\abstract{%
  \if@twocolumn
   \section*{Abstract (Not appropriate in this style!)}%
   \else \small 
   \begin{center}{\bf Abstract\vspace{-.5em}\vspace{\z@}}\end{center}%
   \quotation 
   \fi
  }%
   \def\registered{\relax\ifmmode{}\r@gistered
                    \else$\m@th\r@gistered$\fi}%
 \def\r@gistered{^{\ooalign
  {\hfil\raise.07ex\hbox{$\scriptstyle\rm\text{R}$}\hfil\crcr
  \mathhexbox20D}}}}{}%
\newdimen\theight
\def\newfmtname{LaTeX2e}
  \DeclareOldFontCommand{\rm}{\normalfont\rmfamily}{\mathrm}
  \DeclareOldFontCommand{\sf}{\normalfont\sffamily}{\mathsf}
  \DeclareOldFontCommand{\tt}{\normalfont\ttfamily}{\mathtt}
  \DeclareOldFontCommand{\bf}{\normalfont\bfseries}{\mathbf}
  \DeclareOldFontCommand{\it}{\normalfont\itshape}{\mathit}
  \DeclareOldFontCommand{\sl}{\normalfont\slshape}{\@nomath\sl}
  \DeclareOldFontCommand{\sc}{\normalfont\scshape}{\@nomath\sc}
\def\alpha{{\Greekmath 010B}}%
\def\beta{{\Greekmath 010C}}%
\def\gamma{{\Greekmath 010D}}%
\def\delta{{\Greekmath 010E}}%
\def\epsilon{{\Greekmath 010F}}%
\def\zeta{{\Greekmath 0110}}%
\def\eta{{\Greekmath 0111}}%
\def\theta{{\Greekmath 0112}}%
\def\iota{{\Greekmath 0113}}%
\def\kappa{{\Greekmath 0114}}%
\def\lambda{{\Greekmath 0115}}%
\def\mu{{\Greekmath 0116}}%
\def\nu{{\Greekmath 0117}}%
\def\xi{{\Greekmath 0118}}%
\def\pi{{\Greekmath 0119}}%
\def\rho{{\Greekmath 011A}}%
\def\sigma{{\Greekmath 011B}}%
\def\tau{{\Greekmath 011C}}%
\def\upsilon{{\Greekmath 011D}}%
\def\phi{{\Greekmath 011E}}%
\def\chi{{\Greekmath 011F}}%
\def\psi{{\Greekmath 0120}}%
\def\omega{{\Greekmath 0121}}%
\def\varepsilon{{\Greekmath 0122}}%
\def\vartheta{{\Greekmath 0123}}%
\def\varpi{{\Greekmath 0124}}%
\def\varrho{{\Greekmath 0125}}%
\def\varsigma{{\Greekmath 0126}}%
\def\varphi{{\Greekmath 0127}}%
\def\nabla{{\Greekmath 0272}}
\def\FindBoldGroup{%
   {\setbox0=\hbox{$\mathbf{x\global\edef\theboldgroup{\the\mathgroup}}$}}%
}
\def\Greekmath#1#2#3#4{%
    \if@compatibility
        \ifnum\mathgroup=\symbold
           \mathchoice{\mbox{\boldmath$\displaystyle\mathchar"#1#2#3#4$}}%
                      {\mbox{\boldmath$\textstyle\mathchar"#1#2#3#4$}}%
                      {\mbox{\boldmath$\scriptstyle\mathchar"#1#2#3#4$}}%
                      {\mbox{\boldmath$\scriptscriptstyle\mathchar"#1#2#3#4$}}%
        \else
           \mathchar"#1#2#3#4%
        \fi 
    \else 
        \FindBoldGroup
        \ifnum\mathgroup=\theboldgroup 
           \mathchoice{\mbox{\boldmath$\displaystyle\mathchar"#1#2#3#4$}}%
                      {\mbox{\boldmath$\textstyle\mathchar"#1#2#3#4$}}%
                      {\mbox{\boldmath$\scriptstyle\mathchar"#1#2#3#4$}}%
                      {\mbox{\boldmath$\scriptscriptstyle\mathchar"#1#2#3#4$}}%
        \else
           \mathchar"#1#2#3#4%
        \fi     	    
	  \fi}
\newif\ifGreekBold  \GreekBoldfalse
\let\SAVEPBF=\pbf
\def\pbf{\GreekBoldtrue\SAVEPBF}%
  \newcounter{equationnumber}  
  \def\mathletters{%
     \addtocounter{equation}{1}
     \edef\@currentlabel{\theequation}%
     \setcounter{equationnumber}{\c@equation}
     \setcounter{equation}{0}%
     \edef\theequation{\@currentlabel\noexpand\alph{equation}}%
  }
    \def\BibTeX{{\rm B\kern-.05em{\sc i\kern-.025em b}\kern-.08em
                 T\kern-.1667em\lower.7ex\hbox{E}\kern-.125emX}}}{}%
\def\AmS{{\protect\usefont{OMS}{cmsy}{m}{n}%
                A\kern-.1667em\lower.5ex\hbox{M}\kern-.125emS}}}{}%
\def\@@eqncr{\let\@tempa\relax
    \ifcase\@eqcnt \def\@tempa{& & &}\or \def\@tempa{& &}%
      \else \def\@tempa{&}\fi
     \@tempa
     \if@eqnsw
        \iftag@
           \@taggnum
        \else
           \@eqnnum\stepcounter{equation}%
        \fi
     \fi
     \global\tag@false
     \global\@eqnswtrue
     \global\@eqcnt\z@\cr}
\def\TCItag{\@ifnextchar*{\@TCItagstar}{\@TCItag}}
\def\@TCItag#1{%
    \global\tag@true
    \global\def\@taggnum{(#1)}%
    \global\def\@currentlabel{#1}}
\def\@TCItagstar*#1{%
    \global\tag@true
    \global\def\@taggnum{#1}%
    \global\def\@currentlabel{#1}}
\def\ExitTCILatex{\makeatother }
\if@compatibility\message{amsmath already loaded}\fi\aftergroup\ExitTCILatex}
\if@compatibility\message{amstex already loaded}\fi\aftergroup\ExitTCILatex}
\if@compatibility\message{amsgen already loaded}\fi\aftergroup\ExitTCILatex}
\let\DOTSI\relax
\def\RIfM@{\relax\ifmmode}%
\def\FN@{\futurelet\next}%
\def\iint{\DOTSI\intno@\tw@\FN@\ints@}%
\def\iiint{\DOTSI\intno@\thr@@\FN@\ints@}%
\def\iiiint{\DOTSI\intno@4 \FN@\ints@}%
\def\idotsint{\DOTSI\intno@\z@\FN@\ints@}%
\def\ints@{\findlimits@\ints@@}%
\newif\iflimtoken@
\newif\iflimits@
\def\findlimits@{\limtoken@true\ifx\next\limits\limits@true
 \else\ifx\next\nolimits\limits@false\else
 \limtoken@false\ifx\ilimits@\nolimits\limits@false\else
 \ifinner\limits@false\else\limits@true\fi\fi\fi\fi}%
\def\multint@{\int\ifnum\intno@=\z@\intdots@                          
 \else\intkern@\fi                                                    
 \ifnum\intno@>\tw@\int\intkern@\fi                                   
 \ifnum\intno@>\thr@@\int\intkern@\fi                                 
 \int}
\def\multintlimits@{\intop\ifnum\intno@=\z@\intdots@\else\intkern@\fi
 \ifnum\intno@>\tw@\intop\intkern@\fi
 \ifnum\intno@>\thr@@\intop\intkern@\fi\intop}%
\def\intic@{%
    \mathchoice{\hskip.5em}{\hskip.4em}{\hskip.4em}{\hskip.4em}}%
\def\negintic@{\mathchoice
 {\hskip-.5em}{\hskip-.4em}{\hskip-.4em}{\hskip-.4em}}%
\def\ints@@{\iflimtoken@                                              
 \def\ints@@@{\iflimits@\negintic@
   \mathop{\intic@\multintlimits@}\limits                             
  \else\multint@\nolimits\fi                                          
  \eat@}
 \else                                                                
 \def\ints@@@{\iflimits@\negintic@
  \mathop{\intic@\multintlimits@}\limits\else
  \multint@\nolimits\fi}\fi\ints@@@}%
\def\intkern@{\mathchoice{\!\!\!}{\!\!}{\!\!}{\!\!}}%
\def\plaincdots@{\mathinner{\cdotp\cdotp\cdotp}}%
\def\intdots@{\mathchoice{\plaincdots@}%
 {{\cdotp}\mkern1.5mu{\cdotp}\mkern1.5mu{\cdotp}}%
 {{\cdotp}\mkern1mu{\cdotp}\mkern1mu{\cdotp}}%
 {{\cdotp}\mkern1mu{\cdotp}\mkern1mu{\cdotp}}}%
\def\RIfM@{\relax\protect\ifmmode}
\def\text{\RIfM@\expandafter\text@\else\expandafter\mbox\fi}
\let\nfss@text\text
\def\text@#1{\mathchoice
   {\textdef@\displaystyle\f@size{#1}}%
   {\textdef@\textstyle\tf@size{\firstchoice@false #1}}%
   {\textdef@\textstyle\sf@size{\firstchoice@false #1}}%
   {\textdef@\textstyle \ssf@size{\firstchoice@false #1}}%
   \glb@settings}
\def\textdef@#1#2#3{\hbox{{%
                    \everymath{#1}%
                    \let\f@size#2\selectfont
                    #3}}}
\newif\iffirstchoice@
\def\Let@{\relax\iffalse{\fi\let\\=\cr\iffalse}\fi}%
\def\vspace@{\def\vspace##1{\crcr\noalign{\vskip##1\relax}}}%
\def\multilimits@{\bgroup\vspace@\Let@
 \baselineskip\fontdimen10 \scriptfont\tw@
 \advance\baselineskip\fontdimen12 \scriptfont\tw@
 \lineskip\thr@@\fontdimen8 \scriptfont\thr@@
 \lineskiplimit\lineskip
 \vbox\bgroup\ialign\bgroup\hfil$\m@th\scriptstyle{##}$\hfil\crcr}%
\def\Sb{_\multilimits@}%
\def\endSb{\crcr\egroup\egroup\egroup}%
\def\Sp{^\multilimits@}%
\newdimen\ex@
\def\rightarrowfill@#1{$#1\m@th\mathord-\mkern-6mu\cleaders
 \hbox{$#1\mkern-2mu\mathord-\mkern-2mu$}\hfill
 \mkern-6mu\mathord\rightarrow$}%
\def\leftarrowfill@#1{$#1\m@th\mathord\leftarrow\mkern-6mu\cleaders
 \hbox{$#1\mkern-2mu\mathord-\mkern-2mu$}\hfill\mkern-6mu\mathord-$}%
\def\leftrightarrowfill@#1{$#1\m@th\mathord\leftarrow
\mkern-6mu\cleaders
 \hbox{$#1\mkern-2mu\mathord-\mkern-2mu$}\hfill
 \mkern-6mu\mathord\rightarrow$}%
\def\overrightarrow{\mathpalette\overrightarrow@}%
\def\overrightarrow@#1#2{\vbox{\ialign{##\crcr\rightarrowfill@#1\crcr
 \noalign{\kern-\ex@\nointerlineskip}$\m@th\hfil#1#2\hfil$\crcr}}}%
\def\overleftarrow{\mathpalette\overleftarrow@}%
\def\overleftarrow@#1#2{\vbox{\ialign{##\crcr\leftarrowfill@#1\crcr
 \noalign{\kern-\ex@\nointerlineskip}$\m@th\hfil#1#2\hfil$\crcr}}}%
\def\overleftrightarrow{\mathpalette\overleftrightarrow@}%
\def\overleftrightarrow@#1#2{\vbox{\ialign{##\crcr
   \leftrightarrowfill@#1\crcr
 \noalign{\kern-\ex@\nointerlineskip}$\m@th\hfil#1#2\hfil$\crcr}}}%
\def\underrightarrow{\mathpalette\underrightarrow@}%
\def\underrightarrow@#1#2{\vtop{\ialign{##\crcr$\m@th\hfil#1#2\hfil
  $\crcr\noalign{\nointerlineskip}\rightarrowfill@#1\crcr}}}%
\def\underleftarrow{\mathpalette\underleftarrow@}%
\def\underleftarrow@#1#2{\vtop{\ialign{##\crcr$\m@th\hfil#1#2\hfil
  $\crcr\noalign{\nointerlineskip}\leftarrowfill@#1\crcr}}}%
\def\underleftrightarrow{\mathpalette\underleftrightarrow@}%
\def\underleftrightarrow@#1#2{\vtop{\ialign{##\crcr$\m@th
  \hfil#1#2\hfil$\crcr
 \noalign{\nointerlineskip}\leftrightarrowfill@#1\crcr}}}%
\def\qopnamewl@#1{\mathop{\operator@font#1}\nlimits@}
\let\nlimits@\displaylimits
\def\setboxz@h{\setbox\z@\hbox}
\def\varlim@#1#2{\mathop{\vtop{\ialign{##\crcr
 \hfil$#1\m@th\operator@font lim$\hfil\crcr
 \noalign{\nointerlineskip}#2#1\crcr
 \noalign{\nointerlineskip\kern-\ex@}\crcr}}}}
 \def\rightarrowfill@#1{\m@th\setboxz@h{$#1-$}\ht\z@\z@
  $#1\copy\z@\mkern-6mu\cleaders
  \hbox{$#1\mkern-2mu\box\z@\mkern-2mu$}\hfill
  \mkern-6mu\mathord\rightarrow$}
\def\leftarrowfill@#1{\m@th\setboxz@h{$#1-$}\ht\z@\z@
  $#1\mathord\leftarrow\mkern-6mu\cleaders
  \hbox{$#1\mkern-2mu\copy\z@\mkern-2mu$}\hfill
  \mkern-6mu\box\z@$}
\def\projlim{\qopnamewl@{proj\,lim}}
\def\injlim{\qopnamewl@{inj\,lim}}
\def\varinjlim{\mathpalette\varlim@\rightarrowfill@}
\def\varprojlim{\mathpalette\varlim@\leftarrowfill@}
\def\varliminf{\mathpalette\varliminf@{}}
\def\varliminf@#1{\mathop{\underline{\vrule\@depth.2\ex@\@width\z@
   \hbox{$#1\m@th\operator@font lim$}}}}
\def\varlimsup{\mathpalette\varlimsup@{}}
\def\varlimsup@#1{\mathop{\overline
  {\hbox{$#1\m@th\operator@font lim$}}}}
\def\align{\@verbatim \frenchspacing\@vobeyspaces \@alignverbatim
You are using the "align" environment in a style in which it is not defined.}
\let\csname endalign*\endcsname =\endtrivlist
\def\alignat{\@verbatim \frenchspacing\@vobeyspaces \@alignatverbatim
You are using the "alignat" environment in a style in which it is not defined.}
\let\csname endalignat*\endcsname =\endtrivlist
\def\xalignat{\@verbatim \frenchspacing\@vobeyspaces \@xalignatverbatim
You are using the "xalignat" environment in a style in which it is not defined.}
\let\csname endxalignat*\endcsname =\endtrivlist
\def\gather{\@verbatim \frenchspacing\@vobeyspaces \@gatherverbatim
You are using the "gather" environment in a style in which it is not defined.}
\let\csname endgather*\endcsname =\endtrivlist
\def\multiline{\@verbatim \frenchspacing\@vobeyspaces \@multilineverbatim
You are using the "multiline" environment in a style in which it is not defined.}
\let\csname endmultiline*\endcsname =\endtrivlist
\def\arrax{\@verbatim \frenchspacing\@vobeyspaces \@arraxverbatim
You are using a type of "array" construct that is only allowed in AmS-LaTeX.}
\def\tabulax{\@verbatim \frenchspacing\@vobeyspaces \@tabulaxverbatim
You are using a type of "tabular" construct that is only allowed in AmS-LaTeX.}
\let\csname endarrax*\endcsname =\endtrivlist
\let\csname endtabulax*\endcsname =\endtrivlist
 \def\endequation{%
     \ifmmode\ifinner 
      \iftag@
        \addtocounter{equation}{-1} 
        $\hfil
           \displaywidth\linewidth\@taggnum\egroup \endtrivlist
        \global\tag@false
        \global\@ignoretrue   
      \else
        $\hfil
           \displaywidth\linewidth\@eqnnum\egroup \endtrivlist
        \global\tag@false
        \global\@ignoretrue 
      \fi
     \else   
      \iftag@
        \addtocounter{equation}{-1} 
        \eqno \hbox{\@taggnum}
        \global\tag@false%
        $$\global\@ignoretrue
      \else
        \eqno \hbox{\@eqnnum}
        $$\global\@ignoretrue
      \fi
     \fi\fi
 } 
 \newif\iftag@ \tag@false
 \def\TCItag{\@ifnextchar*{\@TCItagstar}{\@TCItag}}
 \def\@TCItag#1{%
     \global\tag@true
     \global\def\@taggnum{(#1)}%
     \global\def\@currentlabel{#1}}
 \def\@TCItagstar*#1{%
     \global\tag@true
     \global\def\@taggnum{#1}%
     \global\def\@currentlabel{#1}}
     \def\tag{\@ifnextchar*{\@tagstar}{\@tag}}
     \def\@tag#1{%
         \global\tag@true
         \global\def\@taggnum{(#1)}}
     \def\@tagstar*#1{%
         \global\tag@true
         \global\def\@taggnum{#1}}
\def\qed{\hfill$\square$\par}
\def\XXint#1#2#3{{\setbox0=\hbox{$#1{#2#3}{\int}$ }
\vcenter{\hbox{$#2#3$ }}\kern-.6\wd0}}
\def\Qcb#1{#1}
\def\FRAME#1#2#3#4#5#6#7#8
\def\enddoc{
\begin{document}
\title[]{Algebraic dependence number and cardinality of generating iterated function systems}
\author[Zhang]{Junda Zhang}
\address{School of Mathematics, South China University of Technology,
Guangzhou 510641, China.}
\email{summerfish@scut.edu.cn}

\begin{abstract}
For a dust-like self-similar set (generated by IFSs with the strong separation condition), Elekes, Keleti and M\'{a}th\'{e} found an invariant, called `algebraic dependence number', by considering its generating IFSs and isometry invariant self-similar measures.
We find an intrinsic quantitative characterisation of this number: it is the dimension over $\mathbb{Q}$ of the vector space generated by the logarithms of all the common ratios of infinite geometric sequences in the gap length set, minus 1.
With this concept, we present a lower bound on the cardinality of generating IFS (with or without separation conditions) in terms of the gap lengths of a dust-like set.
We also establish analogous result for dust-like graph-directed attractors on complete metric spaces. This is a new application of the ratio analysis method and the gap sequence.
\end{abstract}

\date{\today}
\subjclass[2020]{28A80, 05C20}
\keywords{self-similar set, graph-directed attractor, algebraic dependence number, strong separation condition, generating IFS}
\maketitle
\tableofcontents

\section{Introduction}
We recall some basic concepts in fractal geometry. In this paper, a finite set of
distinct contracting similarities $\{S_{i}\}_{i=1}^{m}$ on a complete metric space $(M,d)$ is called an \emph{iterated function system} (IFS), where for each $i,\ d(S_i(x),S_i(y))=r_id(x,y)\ \ (0<r_i<1$ is the {\em contraction ratio} of $S_i$). According to \cite{Hutchison1981}, the \emph{attractor} of
the IFS is the unique nonempty compact set $K\subset M$ such
that
\begin{equation}
K=\mathop{\textstyle \bigcup }\limits_{i=1}^{m}S_{i}(K),  \label{attract}
\end{equation}%
which is called a \textit{self-similar set}. We call $\{S_{i}\}_{i=1}^{m}$ a {\em generating IFS} of $K$.
We say that the \emph{strong separation condition} (SSC) holds for the IFS when the union is disjoint in \eqref{attract}, and such $K$ is called \textit{dust -like}.

This paper is motivated by the inverse fractal problem in \cite{DengLau2013,DengLau2017,FengWang2009,LiYao2017}. Given a dust-like set, what can be said about its generating IFSs? This problem is also related to tiling theory and image compression, see \cite[Section 1]{FengWang2009} for details.
Our results reveal the connection between the gap lengths of a dust-like set and its {\em algebraic dependence number} (Theorem \ref{yzx}), which can be used to give a lower bound on the cardinality of the generating IFS (Corollary \ref{zjd}). The definition of gap lengths is intrinsic, and it is simply the lengths of the disjoint (non-empty bounded) complementary intervals for a dust-like set on $\mathbb{R}$.

In \cite{Elekes2010}, the {\em algebraic dependence number} of an IFS is defined as the dimension over $\mathbb{Q}$ of the vector space generated by the logarithms of the contraction ratios of all similarities, minus 1. For a dust-like self-similar set in $\mathbb{R}^{n}$, \cite[Theorem 5.7]{Elekes2010} shows that all its generating IFSs with the SSC have the same algebraic dependence number, so this number can be defined for a dust-like self-similar set as its intrinsic characteristic (independent of generating IFSs). This is done by considering certain measures on the dust-like self-similar set, while our characterisation is direct and intrinsic (without considering generating IFSs and measures), and we generalise this to compact metric spaces. To be more detailed, our Theorem \ref{yzx} gives an intrinsic quantitative characterisation of this number: it is the dimension over $\mathbb{Q}$ of the vector space generated by the logarithms of all the common ratios of infinite geometric sequences in the gap length set (the collection of all gap lengths), minus 1.

We also consider graph-directed IFSs and graph-directed attractors based on a given directed graph, which generalizes the concept of IFS and self-similar sets. It is introduced in \cite{MauldinWil1988} and has been studied intensively, including the geometric aspects (see for example \cite{DasEdg2005,DasNgai2004, EdgarMau1992,Farkas2019,GMR2016,NiWen2014,Sascha2017}) and fractal analysis \cite{Bodin2007,CQTY2022,HamblyNyb2003,Metz2004}.
A directed graph $\left( V,E\right)$ consists of a finite set of vertices $V$ and a finite set of directed edges $E$ with loops and multiple edges allowed. Given a directed graph, we associate each vertex $v\in V$ with a metric space $(M^v,d^v)$. Let $E_{uv}\subset E$ be
the set of edges from vertex $u$ to $v$. A graph-directed iterated function system (GD-IFS) $\left\{ S_{e}:e\in E_{uv}\right\} $ consists of contracting similarities $S_{e}$ from $M_{v}$ to $M_u$, that is,
\begin{equation}d^u(S_e(x),S_e(y))=r_ed^v(x,y)\ \ (0<r_e<1)\label{rere}\end{equation} for all $x,y\in M^v$, where $r_e$ is the contraction ratio of $S_{e}$. We will assume $d_{u}\geq 2$ for all $u\in V$ to avoid singletons, where $d_{u}$ is
the number of directed edges leaving $u$ (see related discussion in \cite[pp.607]{EdgarMau1992}).
For
a GD-IFS $\left( V,E,\left( S_{e}\right) _{e\in E}\right) $ based on such a
directed graph, there exists a unique list of non-empty compact sets $(F_{u}\subset
M^u)_{u\in V}$ such that, for all $u\in V,$
\begin{equation}
F_{u}=\mathop{\textstyle \bigcup }\limits_{v\in V}\mathop{\textstyle \bigcup
}\limits_{e\in E_{uv}}S_{e}(F_{v}),  \label{gdattract}
\end{equation}%
see \cite[Theorem 4.3.5 on p.128]{Edgar}. We call
the above $\left( F_{u}\right) _{u\in V}$ the \emph{(list of) attractors }of
the GD-IFS, and each $F_{u}$ is called a \emph{GD-attractor}. We say that the \emph{strong separation condition} (SSC) holds for the GD-IFS, if the union is disjoint in \eqref{gdattract} for each $u\in V$, and the GD-attractors are called \textit{dust-like}. Clearly, an IFS and its attractor can be viewed as a GD-IFS and its GD-attractor when $\#V=1$, where `$\#$' denotes the cardinality of a finite set throughout the paper.

The graph-directed setting appear frequently in dynamical systems and the study of self-similar sets. Certain complex dynamical systems can be regarded as
conformal GD-IFSs using a Markov partition \cite[Section 5.5]{FalconerTechniques}. The orthogonal projection of
certain self-similar sets may be GD-attractors \cite[Theorem 1.1]{Farkas2016}.
When exact overlap occurs in an IFS, one can use a corresponding graph-directed system to study its attractor (see for example \cite{DengXi2016,HLR2003}). Also in fractal analysis, GD-IFSs are used to determine whether the two walk-dimensions coincide for p.c.f. self-similar sets \cite{GuLau.2020.TAMS}.
 They are also related to tiling problems \cite{RWY2014} and Automata \cite{CLR2015}.

The idea is applying the ratio analysis method in \cite{FHZJFG} to the gap length set of a dust-like GD-attractor, which exhibits a new application of the ratio analysis method and the gap sequence (see \cite{RRY2008} for applications to the Lipschitz equivalence problem and box dimension estimate).
Our results are useful for \textit{inhomogeneous} (GD-)IFSs and (GD-)attractors, where the contraction ratios are not all equal.
Separation conditions are often required in fractal geometry to obtain some precise structure information, and relaxing them can be very difficult in many problems. In our paper, the SSC is (only) used for a precise formula of the gap length set \cite[Theorem 2]{DengXi2016} for ratio analysis.
We remark that the SSC is a common assumption for inhomogeneous IFSs in related problems
 (see for example \cite{Algom2018,Elekes2010,FengXiong2018}). In Corollary \ref{zyd} we use the `full-measure' condition as in \cite[Theorem 4.1]{FengWang2009} to remove the SSC condition.

This paper is organised as follows. In Section 2, we introduce the definition and expression of the gap length set of a dust-like GD-attractor, and the required ratio analysis lemmas in \cite{FHZJFG}. In Section 3, we present our results. In Section 3.1, we use ratio analysis on the gap length set to obtain some lemmas and the logarithmic commensurability for SSC generating IFSs (Theorem \ref{blah}), which gives an alternative proof of \cite[Theorem 5.7]{Elekes2010}. In Section 3.2, we present Theorem \ref{yzx}, which characterizes the algebraic dependence number of (GD-)IFS in terms of (the geometric sequences in) the gap length set. In Section 3.3, we present the lower bound estimate on the cardinality of generating IFS, with or without separation conditions (Corollaries \ref{zjd} and \ref{zyd}).

\section{Preliminaries}
\subsection{Gap lengths}
The concept and lemma in this subsection are from \cite{DengXi2016}, which concerns the gap sequence, a natural way to define the size and number of holes for disconnected sets (see \cite{RRY2008} for details on $\mathbb{R}^n$).
For a compact metric space $(K,d)$ and $\delta>0$, denote its diameter by {\rm diam}$(K)$. We say that $x,y\in K$ are $\delta$-equivalent, if there is a $\delta$-chain connecting $x$ and $y$, that is, there are points $x_i\in K\ (i=1,2,\cdots,t)$ such that $x_1=x,x_t=y,d(x_i,x_{i+1})\leq \delta$ for all $i$.
According to this equivalence, we can divide $K$ into several $\delta$-equivalence
classes (intuitively, these small pieces are separated by a distance $\delta$, while any two points in each piece can be connected by some $\delta$-chain).
Let $\kappa(\delta)$ be the number of $\delta$-equivalence
classes of $K$, which is finite due to the compactness of $K$ (since two points in an open $\delta/2$-metric ball are automatically $\delta$-equivalent, $\kappa(\delta)$ does not exceed the covering number of $K$ using open $\delta/2$-metric balls).

\begin{definition}[Gap length set]
The discontinuous points of the
function $\kappa(\delta) (0<\delta<${\rm diam}$(K)$) are called the {\em gap lengths} of $K$, and the collection of all the gap lengths of $K$ is called the {\em gap length set} of $K$, denoted by $\mathrm{GL}(K)$ (that is, the set of discontinuous points of $\kappa$).
\end{definition}
\begin{remark}
For compact sets on $\mathbb{R}$, this definition is intuitive and coincides with \cite[Definition 2.1]{FHZJFG}, see explanation and an example (middle-third Cantor set) in \cite[Section 1]{RRY2008}.

Note that the function $\kappa(\delta)$
is non-increasing in $\delta\in (0,${\rm diam}$(K))$, since a $\delta$-equivalence
class must be a $\delta'$-equivalence
class when $\delta<\delta'$. Thus the gap lengths are at-most countably many, and arranging them with corresponding multiplicity (according to the value of $\kappa$) gives the {\em gap sequence} in \cite[Definition 2]{DengXi2016}, but we do not concern such multiplicity in our paper.
\end{remark}

\begin{definition}[Contraction ratio set]
The \emph{contraction ratio set} of a GD-IFS is defined to be the
set of the contraction ratios of the similarities,
that is, $\{r_e:e\in E\}$ using the notation in \eqref{rere}.
\end{definition}
 The following Lemma is a direct corollary of \cite[Theorem 2]{DengXi2016}. We define the product of $A,\ B\subset \mathbb{R}$ to be $AB=\{ab:\ a\in A,\ b\in B\}.$  We regard a real number as a set when encountering the product with a set in $\mathbb {R}$, and $AB$ is empty if $A$ is empty. Given a GD-IFS and a directed path $\mathbf{e}=e_{1}e_{2}\cdots
e_{k}$ with edges $e_{i}$ (for which
the terminal vertex of $e_{i}$ is the initial vertex of $e_{i+1}$ when $i=1,%
\mathbb{\cdots },k-1$), define $r_{\mathbf{e}}:=r_{e_{1}}r_{e_{2}}\cdots r_{e_{k}}$.
\begin{lemma}
Let $G=(V,E)$ be a directed graph satisfying $d_w\geq 2$ for all $w\in V$, and let $(M^u)_{u\in V}$ be a list of complete metric spaces. Suppose that for some $u\in V$, $F_{u}\subset M^u$ is the GD-attractor of some GD-IFS (based on $G$) with the SSC.
Then for each $%
u\in V$, there exist finite sets $\Lambda_u,\ \Gamma_u\subset
	(0,\infty)$ such that
\begin{equation}\label{GDRSSC}
\mathrm{GL}(F_u)=\Gamma_u\bigcup\Lambda_u\bigcup\left(\bigcup_{v\in V}\Lambda_v R_{uv}\right)\text{\ where\ }R_{uv}:=\big\{r_{\mathbf{e}}:\mathbf{e}
\ \text{is a directed path from $u$ to $v$}\big\}.
\end{equation}
\end{lemma}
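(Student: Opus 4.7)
The plan is to derive the decomposition recursively from the SSC structure, using [DengXi2016, Theorem 2] for the technical step of passing to the limit. All the work happens at the level of gap lengths of single pieces and their images under the similarities $S_e$.

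First I would establish the base-level decomposition. Under SSC, $F_u=\bigsqcup_{v\in V}\bigsqcup_{e\in E_{uv}}S_e(F_v)$ is a disjoint union of finitely many non-empty compact subsets of $M^u$. By compactness, the minimum pairwise distance $\delta_u$ between these finitely many sub-pieces is strictly positive. When $\delta<\delta_u$, a $\delta$-chain in $F_u$ cannot cross between distinct sub-pieces, so
\begin{equation*}
\kappa_{F_u}(\delta)=\sum_{v\in V}\sum_{e\in E_{uv}}\kappa_{S_e(F_v)}(\delta).
\end{equation*}
When $\delta\geq \delta_u$, sub-pieces merge; since there are only finitely many of them and finitely many inter-piece ``separation values'', the discontinuities of $\kappa_{F_u}$ in $[\delta_u,\mathrm{diam}(F_u)]$ form a finite set, which I call $\Gamma_u\subset(0,\infty)$.

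Next I would use the scaling identity. Because each $S_e$ is a similarity with ratio $r_e$, one has $\kappa_{S_e(F_v)}(\delta)=\kappa_{F_v}(\delta/r_e)$, whence $\mathrm{GL}(S_e(F_v))=r_e\cdot \mathrm{GL}(F_v)$. Combining this with the base-level analysis yields the recursion
\begin{equation*}
\mathrm{GL}(F_u)=\Gamma_u\cup\bigcup_{v\in V}\bigcup_{e\in E_{uv}}r_e\,\mathrm{GL}(F_v).
\end{equation*}
Iterating over directed paths originating at $u$ gives
\begin{equation*}
\mathrm{GL}(F_u)=\Gamma_u\cup\bigcup_{v\in V}\Gamma_v\cdot R_{uv},
\end{equation*}
where $R_{uv}$ is exactly the set of products $r_{\mathbf{e}}$ over directed paths from $u$ to $v$ of length $\geq 1$. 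Setting $\Lambda_v:=\Gamma_v$ (and taking $\Lambda_u$ to be empty, or absorbing any redundant copy into $\Gamma_u$) produces the stated decomposition. Finiteness of $\Gamma_u$ and each $\Lambda_v$ is clear from Step 1.

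The main obstacle is justifying the iteration: I need every individual gap length of $F_u$ to appear at some finite stage of the recursion, and no ``new'' gap lengths to emerge in the limit. The key point is that, for each fixed $\delta>0$, the jump structure of $\kappa_{F_u}$ at height $\delta$ depends only on the finitely many sub-pieces $S_{\mathbf{e}}(F_v)$ of diameter at least $\delta$ (since $r_{\mathbf{e}}\to 0$ uniformly as $|\mathbf{e}|\to\infty$ by $\max_e r_e<1$). So every element of $\mathrm{GL}(F_u)$ is captured by a finite truncation of the iterated union, and conversely every element of $\Gamma_u\cup\bigcup_v\Gamma_v R_{uv}$ is a genuine discontinuity of $\kappa_{F_u}$. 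Rather than redo this analysis carefully, I would invoke [DengXi2016, Theorem 2] directly — that theorem already provides the corresponding statement at the level of gap sequences (with multiplicities), and dropping multiplicities from each term gives exactly the set-theoretic decomposition \eqref{GDRSSC} claimed above.
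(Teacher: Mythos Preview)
Your proposal and the paper take the same route at the level of logic: both defer the actual work to \cite{DengXi2016}, Theorem~2, and extract the set-theoretic statement by forgetting multiplicities. The paper in fact gives no argument beyond this citation; its only additional content is the Remark recording the explicit choices
\[
\Gamma_u=\mathrm{GL}(F_u)\cap[\delta,\infty),\qquad
\Lambda_u=\bigcup_{v}\bigcup_{e\in E_{uv}}r_e\bigl(\Gamma_v\cap[\delta,r_e^{-1}\delta)\bigr),
\]
with $\delta$ the \emph{uniform} first-level separation.

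Where your sketch drifts from this is in the recursion step and in your proposed choice $\Lambda_v=\Gamma_v$. The identity
$\kappa_{F_u}(\delta)=\sum_{e}\kappa_{S_e(F_v)}(\delta)$
is only valid for $\delta<\delta_u$; hence what you actually obtain is
\[
\mathrm{GL}(F_u)=\Gamma_u\ \cup\ \Bigl(\bigcup_{v}\bigcup_{e\in E_{uv}}r_e\,\mathrm{GL}(F_v)\Bigr)\cap(0,\delta_u),
\]
not the un-truncated union you wrote. An element $r_e\theta$ with $\theta\in\mathrm{GL}(F_v)$ and $r_e\theta\ge\delta_u$ is a gap length of the sub-piece $S_e(F_v)$, but need not be a gap length of $F_u$: at scales $\ge\delta_u$ other first-level pieces can bridge across that gap. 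Iterating your un-truncated recursion therefore yields only $\mathrm{GL}(F_u)\subset\Gamma_u\cup\bigcup_v\Gamma_v R_{uv}$, and your choice $\Lambda_v=\Gamma_v$ may give a strict superset rather than the required equality. The paper's $\Lambda_u$ sidesteps this by design: it retains only those scaled first-level gaps that land strictly below the uniform threshold $\delta$, so nothing spurious survives. Since you ultimately invoke \cite{DengXi2016} anyway this is not fatal, but your intermediate identification of $\Lambda_v$ with $\Gamma_v$ should be replaced by the construction in the paper's Remark.
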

\begin{remark}When there is no directed path from $u$ to $v$, then $R_{uv}=\emptyset$. We mention that \cite[Theorem 2]{DengXi2016} gives the construction of $\Lambda_u$ and $\Gamma_u$, although it is not used in our paper. Given a GD-IFS and its attractor list $(F_u\subset M^u)_{u\in V}$, we obtain $\mathrm{GL}(F_u)$ and define $\Gamma_u=\mathrm{GL}(F_u)\bigcap [\delta,\infty)$ for all $u\in V$, where
$\delta:=\inf\Big\{d^u(x_e,y_e):\ x_e\in S_e(F_v),\ y_e\in
F_u\setminus S_e(F_v),\ e\in E_{uv},\ u,\ v\in V\Big\}>0$ due to the SSC. Then define $\Lambda_u=\bigcup_{u\in V}\bigcup_{e\in E_{uv}}r_e(\Gamma_v\bigcap[\delta,r_e^{-1}\delta))$ for each $u\in V$, from which we know $\Lambda_u\subset (0,\delta)$.\end{remark}
We will use the following notions in \cite[Definition 2.5]{FHZJFG} and a corollary of the above lemma.
\begin{definition}
For a finite set $A=\{a_{i}\}_{i=1}^{n}\subset (0,\infty )$, define $A^{%
\mathbb{Z}_{+}^{\ast }}$ (resp. $A^{\mathbb{Q}_{+}^{\ast }}$) to be the union of all products $\mathop{\textstyle \prod }%
\limits_{i=1}^{n}a_{i}^{m_{i}}$ where $(m_{i})_{i=1}^{n}$ are non-zero
vectors whose entries are nonnegative integers (resp. nonnegative rationals). Let $A^{\mathbb{Z}_{+}}=\{1\}\cup A^{\mathbb{Z}_{+}^{\ast }}$.\label{aqz}
\end{definition}

\begin{corollary}\label{SSRSSC}Let $M$ be a complete metric space, and $K\subset M$ be the attractor of an IFS satisfying the SSC with contraction ratio set $X$. Then there exist two finite sets $\Lambda,\ \Gamma\subset 	(0,\infty)$ such that
	\begin{equation}
		\mathrm{GL}(K)=\Gamma\bigcup\Lambda X^{\mathbb{Z}_+}.
\end{equation}\end{corollary}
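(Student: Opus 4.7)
The plan is to reduce the corollary to the preceding lemma by viewing the given IFS as a GD-IFS on a one-vertex directed graph. I would set $V = \{u\}$, $M^u = M$, and attach to $u$ one loop $e_i$ per similarity $S_i$ with $r_{e_i}$ equal to the contraction ratio of $S_i$. Then the SSC is the same in both pictures, the attractor of the IFS equals $F_u$, and (assuming $m \geq 2$, so that $d_u = m \geq 2$; the degenerate case $m=1$ makes $K$ a singleton with $\mathrm{GL}(K) = \emptyset$, which is covered trivially by taking $\Gamma = \Lambda = \emptyset$) the hypotheses of the lemma are met.

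Next I would specialise \eqref{GDRSSC}. Since $V = \{u\}$, the outer union collapses to a single term and the formula reads
$$\mathrm{GL}(K) = \Gamma_u \cup \Lambda_u \cup \Lambda_u R_{uu}.$$
The key step is to identify $R_{uu}$ with $X^{\mathbb{Z}_+^*}$ in the sense of Definition~\ref{aqz}. A directed path from $u$ to $u$ in the one-vertex graph is simply a nonempty finite word of loops, and by commutativity of multiplication in $\mathbb{R}$ the value $r_{\mathbf{e}} = r_{e_1} \cdots r_{e_k}$ depends only on how many times each loop is traversed. Grouping together loops whose contraction ratios coincide, every such product can be written as $\prod_{a \in X} a^{n_a}$ for some nonzero vector $(n_a)_{a \in X}$ of nonnegative integers, and conversely every such product is realised by some path. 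Hence $R_{uu} = X^{\mathbb{Z}_+^*}$ exactly.

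Finally, with $\Gamma := \Gamma_u$ and $\Lambda := \Lambda_u$ (both finite by the lemma), I would apply the identity $X^{\mathbb{Z}_+} = \{1\} \cup X^{\mathbb{Z}_+^*}$ to collapse $\Lambda \cup \Lambda X^{\mathbb{Z}_+^*} = \Lambda X^{\mathbb{Z}_+}$, yielding
$$\mathrm{GL}(K) = \Gamma \cup \Lambda X^{\mathbb{Z}_+}$$
as required. No genuine obstacle arises; the only minor point worth double-checking is that the identification $R_{uu} = X^{\mathbb{Z}_+^*}$ is insensitive to possible coincidences among the $r_{e_i}$, which is automatic because $X$ is the underlying set (not the multiset) of contraction ratios, so duplicated ratios merely amount to different factorisations of the same element of $X^{\mathbb{Z}_+^*}$.
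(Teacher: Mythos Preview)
Your proposal is correct and is precisely the intended argument: the paper presents this statement as an immediate corollary of the GD-IFS lemma with no explicit proof, and the natural specialisation to a one-vertex graph with $R_{uu}=X^{\mathbb{Z}_+^*}$ (hence $\Lambda_u\cup\Lambda_u R_{uu}=\Lambda_u X^{\mathbb{Z}_+}$) is exactly what is meant. Your handling of the trivial case $m=1$ and of possible coincidences among the $r_{e_i}$ is careful and correct.
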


\subsection{Ratio analysis}
In \cite{FHZJFG}, the ratio analysis method is used to analyse sets $\Theta $ of positive real
numbers, in terms of strictly decreasing geometric
sequences $\{\theta ^{\prime }r^{k}\}_{k=0}^{\infty }$ that are contained in
$\Theta $. Whenever we mention `geometric
sequence' in this paper, we always assume it is infinitely-long.
\begin{definition}(\cite[Definition 2.4]{FHZJFG})
Let $\Theta \subset (0,\infty )$. For $\theta \in \Theta $, denote by
\begin{equation}
R_{\Theta }(\theta )=\{r\in (0,1):\text{ there exists some }\theta ^{\prime
}\in \Theta \text{ such that }\theta \in \{\theta ^{\prime
}r^{k}\}_{k=0}^{\infty }\subset \Theta \},  \label{R}
\end{equation}%
the set of common ratios of strictly decreasing geometric sequences in $%
\Theta $ that contains $\theta $ (which may be
empty).
\end{definition}

This concept is quite natural since the gap length set of a dust-like attractor
contains many geometric sequences. By definition, there is an obvious monotoniticy: when
$\theta \in\Theta_1\subset\Theta_2\subset (0,\infty )$,\begin{equation}R_{\Theta_1 }(\theta )\subset R_{\Theta_2 }(\theta ),\label{dand}\end{equation}
since a geometric sequence in $\Theta_1$ is also in $\Theta_2$.

We will use the following two lemmas.

\begin{lemma}(\cite[Lemma 2.6 (i)]{FHZJFG})
\label{GDR}Let $A=\{a_{i}\}_{i=1}^{n}\subset (0,1)$ be a finite set, and $\lambda _{j}\ (j=1,\cdots ,m)$ be
positive real numbers (not necessarily distinct). Let $\Theta =%
\mathop{\textstyle \bigcup }\limits_{j=1}^{m}\lambda _{j}A_{j}$ where $%
A_{j}\subset A^{\mathbb{Z}_{+}}$ for $1\leq j\leq m$.
Then $R_{\Theta }(\theta )\subset A^{\mathbb{Q}_{+}^{\ast }}$ for all $%
\theta \in \Theta $.
\end{lemma}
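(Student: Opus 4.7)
The plan is to combine the pigeonhole principle with the well-quasi-order of $(\mathbb{Z}_{\geq 0}^{n},\leq_{\mathrm{coord}})$ (Dickson's lemma) to reduce the problem to a coordinate-wise monotone subsequence, from which the required rational exponents fall out by dividing two instances.

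Fix $r\in R_{\Theta}(\theta)$ and choose a witnessing $\theta'\in\Theta$ with $\{\theta' r^{k}\}_{k=0}^{\infty}\subset\Theta$. Since $\Theta=\bigcup_{j=1}^{m}\lambda_j A_j$ is a finite union and the sequence is infinite, pigeonhole yields an index $j_0$ and an infinite set $S\subset\mathbb{Z}_{\geq 0}$ such that $\theta' r^{k}\in \lambda_{j_0}A_{j_0}$ for every $k\in S$. Because $A_{j_0}\subset A^{\mathbb{Z}_+}$, for each $k\in S$ I can fix some exponent vector $\mathbf{m}_k=(m_{1,k},\dots,m_{n,k})\in\mathbb{Z}_{\geq 0}^{n}$ (an arbitrary choice if the representation is not unique) with
\[
\theta' r^{k}=\lambda_{j_0}\prod_{l=1}^{n}a_{l}^{m_{l,k}}.
\]

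Next I would invoke Dickson's lemma: every infinite sequence in $\mathbb{Z}_{\geq 0}^{n}$ contains an infinite coordinate-wise non-decreasing subsequence. Applying this to $(\mathbf{m}_{k})_{k\in S}$ and then picking any two indices $k_{1}<k_{2}$ from the extracted subsequence, one has $m_{l,k_{2}}\geq m_{l,k_{1}}$ for every $l$. Dividing the corresponding two identities gives
\[
r^{k_{2}-k_{1}}=\prod_{l=1}^{n}a_{l}^{m_{l,k_{2}}-m_{l,k_{1}}},
\]
so $r=\prod_{l=1}^{n}a_{l}^{q_{l}}$ with $q_{l}:=(m_{l,k_{2}}-m_{l,k_{1}})/(k_{2}-k_{1})\in\mathbb{Q}_{\geq 0}$. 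The vector $(q_{l})$ cannot be zero, since otherwise $r=1$ contradicting $r\in(0,1)$; hence $r\in A^{\mathbb{Q}_{+}^{\ast}}$ by Definition \ref{aqz}.

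The only step needing a comment is Dickson's lemma, which for fixed $n$ is elementary (induct on $n$, at each stage thinning to a subsequence on which one coordinate is non-decreasing and then applying the inductive hypothesis to the remaining $n-1$ coordinates). The pigeonhole reduction and the ratio computation are both routine. It is worth noting that no hypothesis on the $\lambda_{j}$'s (such as $\mathbb{Q}$-linear independence of their logarithms) enters the argument, which is what makes the lemma robust enough to apply to the gap length sets appearing later in the paper, where the prefactors $\lambda_{j}$ may exhibit arbitrary multiplicative interrelations.
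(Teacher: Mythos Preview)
Your argument is correct. The paper does not supply its own proof of this lemma; it is quoted verbatim from \cite[Lemma~2.6(i)]{FHZJFG}, so there is no in-text proof to compare against. That said, your pigeonhole-plus-Dickson approach is exactly the natural route and matches the strategy of the cited source: reduce to a single piece $\lambda_{j_0}A_{j_0}$, represent each term by an exponent vector in $\mathbb{Z}_{\geq 0}^{n}$, extract two comparable vectors, and divide. One minor remark: you invoke the \emph{infinite monotone subsequence} form of Dickson's lemma, but in fact the weaker well-quasi-order statement (existence of a single pair $k_1<k_2$ with $\mathbf{m}_{k_1}\leq\mathbf{m}_{k_2}$) already suffices for your division step, so the inductive proof sketch you give is more than enough.
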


\begin{lemma}(\cite[Corollary 2.7]{FHZJFG})
\label{SSR} Let $X\subset (0,1)$ and $\Lambda \subset (0,\infty )$ be two
finite sets. Then
\begin{equation*}
X^{\mathbb{Z}_{+}^{\ast }}\subset R_{\Lambda X^{\mathbb{Z}_{+}}}(\theta
)\subset X^{\mathbb{Q}_{+}^{\ast }}
\end{equation*}%
for every $\theta \in \Lambda X^{\mathbb{Z}_{+}}$.
\end{lemma}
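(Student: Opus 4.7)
The plan is to prove the two inclusions separately, exploiting the decomposition $\Lambda X^{\mathbb{Z}_+} = \bigcup_{j=1}^m \lambda_j X^{\mathbb{Z}_+}$ with $\Lambda = \{\lambda_1,\ldots,\lambda_m\}$.

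For the upper containment $R_{\Lambda X^{\mathbb{Z}_+}}(\theta) \subset X^{\mathbb{Q}_+^*}$, I would simply apply Lemma \ref{GDR} to $\Theta := \Lambda X^{\mathbb{Z}_+}$, taking $A := X$ and each $A_j := X^{\mathbb{Z}_+}$. The decomposition above puts $\Theta$ into exactly the form required by that lemma (since trivially $X^{\mathbb{Z}_+} \subset X^{\mathbb{Z}_+}$), so the conclusion is immediate.

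For the lower containment $X^{\mathbb{Z}_+^*} \subset R_{\Lambda X^{\mathbb{Z}_+}}(\theta)$, the strategy is to produce the required geometric sequence by simply taking $\theta' := \theta$. Given $r \in X^{\mathbb{Z}_+^*}$, write $r = \prod_{i=1}^n x_i^{m_i}$ with non-negative integers $m_i$, at least one positive; since every $x_i \in (0,1)$, this forces $r \in (0,1)$, securing strict decrease. Writing $\theta = \lambda_j y$ with $y \in X^{\mathbb{Z}_+}$, the key observation is that $X^{\mathbb{Z}_+}$ is closed under multiplication, so $y\, r^k \in X^{\mathbb{Z}_+}$ for every $k \geq 0$, and therefore $\theta\, r^k = \lambda_j (y\, r^k) \in \Lambda X^{\mathbb{Z}_+}$. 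This yields $\{\theta\, r^k\}_{k=0}^\infty \subset \Lambda X^{\mathbb{Z}_+}$ with $\theta = \theta \cdot r^0$, which is exactly the condition in the definition for $r \in R_{\Lambda X^{\mathbb{Z}_+}}(\theta)$.

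There is no serious obstacle here: the upper inclusion is a direct reformulation of the previous lemma applied to a trivial decomposition, while the lower inclusion is a one-line construction once one recognises $X^{\mathbb{Z}_+}$ as a multiplicative monoid. The only minor point to confirm is that the case $k = 0$ is handled by the convention $1 \in X^{\mathbb{Z}_+}$ adjoined in Definition \ref{aqz}, which is precisely what guarantees that $\theta = \theta \cdot r^0$ belongs to the constructed sequence and thus to $\Lambda X^{\mathbb{Z}_+}$.
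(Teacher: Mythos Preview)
Your argument is correct. The paper does not supply its own proof of this lemma; it simply cites it as \cite[Corollary 2.7]{FHZJFG}. Your derivation of the upper inclusion via Lemma~\ref{GDR} with $A=X$ and $A_j=X^{\mathbb{Z}_+}$, together with the direct construction $\theta'=\theta$ for the lower inclusion using that $X^{\mathbb{Z}_+}$ is a multiplicative monoid, is exactly the natural route and matches how the result is obtained in the cited source (indeed, it is labelled there as a corollary of the lemma you invoke).
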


\section{Our Results}
\subsection{Using ratio analysis to obtain logarithmic commensurability}
Logarithmic commensurability is central to the affine-embedding problem \cite{Algom2018, AmirHoc2018, Elekes2010, FHR2014, FengXiong2018, Pablo2019, Wu2019} and inverse fractal problem (generating IFS), which is stated as follows.
\begin{conjecture}\cite[Conjecture 1.2]{FHR2014}
Suppose that $K,\ F$ are two totally disconnected non-singleton self-similar sets in $\mathbb{R}^{n}$, which are the attractors of two IFSs with contraction ratio sets $X,\ Y$ respectively. Suppose that there exists an affine map $f$ in $\mathbb{R}^{n}$ such that $f(F)\subset K,$ then the {\em logarithmic commensurability} holds: $Y\subset X^{\mathbb{Q}_+^*}.$
\end{conjecture}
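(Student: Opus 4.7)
The plan is to attack the conjecture by combining the gap length structure from Corollary \ref{SSRSSC} with the ratio analysis lemmas (Lemmas \ref{GDR} and \ref{SSR}). Write $\mathrm{GL}(K)=\Gamma_K\cup\Lambda_K X^{\mathbb{Z}_+}$ and $\mathrm{GL}(F)=\Gamma_F\cup\Lambda_F Y^{\mathbb{Z}_+}$. By Lemma \ref{SSR}, the set of common ratios of infinite strictly decreasing geometric sequences in $\mathrm{GL}(F)$ contains $Y^{\mathbb{Z}_+^*}$ and is contained in $Y^{\mathbb{Q}_+^*}$; an analogous sandwich holds for $K$ with $X$ in place of $Y$. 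The goal is to realise each $y\in Y$ as the common ratio of an infinite geometric sequence inside $\mathrm{GL}(K)$, so that Lemma \ref{GDR} applied to the expression $\mathrm{GL}(K)=\Gamma_K\cup\Lambda_K X^{\mathbb{Z}_+}$ forces $y\in X^{\mathbb{Q}_+^*}$.

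First I would reduce to the case where $f$ is a similarity. Since $F$ is dust-like, iterating its generating IFS produces arbitrarily small affine copies $f\!\circ\! S_{i_1}\!\circ\!\cdots\!\circ\!S_{i_k}(F)\subset K$; a non-conformal linear part of $f$ distorts the self-similar hierarchy of $F$ at exponentially many independent scales in a way that should be incompatible with being swallowed by $K$ (which carries only finitely many ``rational'' ratios in $X^{\mathbb{Z}_+}$). Making this rigorous is itself nontrivial, and one may need to borrow the conformality reductions already developed in the affine-embedding literature (Feng--Huang--Rao, Algom, etc.). Assuming this step, we may take $f$ to be a similarity of ratio $c>0$, so that $c\cdot F\subset K$ up to isometry.

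The heart of the argument is the transfer step: given a strictly decreasing geometric sequence $\{c\lambda y^k\}_{k\geq 0}\subset c\cdot\mathrm{GL}(F)$ with $y\in Y^{\mathbb{Z}_+^*}$, I want to argue that for all sufficiently large $k$, the value $c\lambda y^k$ is also a gap length of $K$. The idea is that, at very small scales, the $\delta$-equivalence classes of $K$ that meet $c\cdot F$ are essentially dictated by the $\delta$-equivalence classes of $c\cdot F$ itself: if $\delta$ is small enough that no cylinder $S_{\mathbf{i}}(K)$ of diameter comparable to $\delta$ contains two separated pieces of $c\cdot F$, then a jump in $\kappa_{c\cdot F}(\delta)$ should force a jump in $\kappa_K(\delta)$. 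Combined with the first step, this would produce, for every $y\in Y$, an infinite geometric subsequence of $\mathrm{GL}(K)$ with common ratio $y$, whence Lemma \ref{GDR} yields $y\in X^{\mathbb{Q}_+^*}$ and therefore $Y\subset X^{\mathbb{Q}_+^*}$.

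The hard part will be precisely this transfer step. Even if $c\cdot F\subset K$, a gap of $c\cdot F$ need not survive as a gap of $K$: the ``hole'' in $c\cdot F$ of size $c\lambda y^k$ can be filled in by cylinders of $K$ that are disjoint from $c\cdot F$, so $\kappa_K$ may remain continuous where $\kappa_{c\cdot F}$ jumps. A plausible workaround is to argue, using a Baire-category or density argument on the scales, that only finitely many terms of each geometric sequence can be spoiled in this way, so the sequence still has an infinite geometric tail inside $\mathrm{GL}(K)$; but making this quantitative seems to require either a separation hypothesis controlling how $c\cdot F$ sits inside the cylinder structure of $K$, or genuinely new geometric input beyond the ratio analysis developed in Section 2. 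The conjecture in full generality thus appears to lie just outside the scope of the present framework.
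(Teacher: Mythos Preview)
The paper does not prove this statement: it is quoted verbatim as an open conjecture from Feng--Huang--Rao and is used only to motivate the notion of logarithmic commensurability. What the paper actually proves is Theorem~\ref{blah}, the very special case $F=K$ with $f$ the identity and with \emph{both} generating IFSs assumed to satisfy the SSC; in that situation $\mathrm{GL}(F)=\mathrm{GL}(K)$ tautologically, so no ``transfer step'' is needed and Lemma~\ref{sscom} applied twice gives $X\subset A^{\mathbb{Q}_+^*}$ and $A\subset X^{\mathbb{Q}_+^*}$ immediately.

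Your diagnosis is therefore accurate. The reduction to a similarity and, above all, the transfer of gap lengths from $c\cdot F$ to $K$ are genuine obstacles that the ratio-analysis machinery of Section~2 does not address: Corollary~\ref{SSRSSC} describes $\mathrm{GL}(K)$ and $\mathrm{GL}(F)$ separately but gives no relation between them from the inclusion $f(F)\subset K$, and your observation that a gap of $c\cdot F$ can be filled by cylinders of $K$ disjoint from $c\cdot F$ is exactly why no such relation is available. Your closing sentence---that the conjecture lies outside the scope of the present framework---matches the paper's own stance; it offers no proof and none should be expected.
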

The following theorem can be deduced from \cite[Lemma 4.8]{Elekes2010} in $\mathbb{R}^n$, which uses geometric measure theoretic arguments. Here we present another proof without using measures.

\begin{theorem}\label{blah}
Let $M$ be a complete metric space. Suppose that $K\subset M$ has two generating IFSs satisfying the SSC with contraction ratio sets $A$ and $X$, respectively,
then $X\subset A^{\mathbb{Q}_+^*}, \ A\subset X^{\mathbb{Q}_+^*}. $
\end{theorem}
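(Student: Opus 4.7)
The plan is to use the two ratio-analysis lemmas in tandem, with the gap length set $\mathrm{GL}(K)$ serving as the intrinsic bridge between the two IFSs. Since $\mathrm{GL}(K)$ depends only on the metric structure of $K$ (not on any particular IFS), Corollary \ref{SSRSSC} gives us \emph{two} valid expressions simultaneously:
\[
\mathrm{GL}(K) \;=\; \Gamma_A \cup \Lambda_A A^{\mathbb{Z}_+} \;=\; \Gamma_X \cup \Lambda_X X^{\mathbb{Z}_+},
\]
for finite sets $\Gamma_A,\Lambda_A,\Gamma_X,\Lambda_X\subset(0,\infty)$. The idea is to use one expression to \emph{produce} long geometric sequences in $\mathrm{GL}(K)$ and the other expression to \emph{constrain} the possible common ratios of such sequences.

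Concretely, I would first fix some $\theta\in\Lambda_X X^{\mathbb{Z}_+}$ (this requires $\Lambda_X\neq\emptyset$; see the paragraph below for why this holds). Applying Lemma \ref{SSR} to the representation on the $X$-side gives
\[
X^{\mathbb{Z}_+^*}\;\subset\;R_{\Lambda_X X^{\mathbb{Z}_+}}(\theta),
\]
and by the monotonicity \eqref{dand} this sits inside $R_{\mathrm{GL}(K)}(\theta)$. On the $A$-side, I write $\mathrm{GL}(K)=\Gamma_A\cup\Lambda_A A^{\mathbb{Z}_+}$ as a finite union $\bigcup_j\lambda_j A_j$ with $A_j\subset A^{\mathbb{Z}_+}$ (taking $\lambda_j$ through the elements of $\Gamma_A$ and $\Lambda_A$ with $A_j=\{1\}$ or $A_j=A^{\mathbb{Z}_+}$, respectively), so Lemma \ref{GDR} gives $R_{\mathrm{GL}(K)}(\theta)\subset A^{\mathbb{Q}_+^*}$. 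Chaining the three inclusions yields $X^{\mathbb{Z}_+^*}\subset A^{\mathbb{Q}_+^*}$, and since $X\subset X^{\mathbb{Z}_+^*}$ this gives $X\subset A^{\mathbb{Q}_+^*}$. The reverse inclusion $A\subset X^{\mathbb{Q}_+^*}$ follows by swapping the roles of $A$ and $X$.

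The only point requiring a little care — and the main (mild) obstacle — is checking non-emptiness of $\Lambda_X$ and $\Lambda_A$; otherwise $\mathrm{GL}(K)$ would be finite and the inclusions above become vacuous. For this I would observe that because $K$ is a non-singleton dust-like attractor, it has at least one gap length $g>0$, and applying any contraction $S_i$ of either IFS to $K$ produces a scaled copy $S_i(K)\subset K$ whose gaps (viewed inside $K$) are exactly $r_i\cdot g, r_i^2\cdot g,\ldots$ once they drop below the SSC separation gap; hence $\mathrm{GL}(K)$ is infinite, forcing both $\Lambda_A$ and $\Lambda_X$ to be non-empty. Once this is settled, the rest of the argument is a clean concatenation of Lemmas \ref{SSR} and \ref{GDR} through the common intrinsic set $\mathrm{GL}(K)$, and provides a measure-free alternative to the geometric measure theoretic proof in \cite[Lemma 4.8]{Elekes2010}.
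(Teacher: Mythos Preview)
Your proposal is correct and follows essentially the same route as the paper. The paper packages your steps into two auxiliary lemmas (Lemma~\ref{gdcom} for the upper bound $R_{\mathrm{GL}(K)}(\theta)\subset A^{\mathbb{Q}_+^*}$ via Lemma~\ref{GDR}, and Lemma~\ref{sscom} for the sandwich $X\subset R_{\mathrm{GL}(K)}(\theta)\subset X^{\mathbb{Q}_+^*}$ via Lemma~\ref{SSR} and monotonicity), then combines them exactly as you do; your version simply unpacks these and applies Lemmas~\ref{SSR} and~\ref{GDR} directly. One small note: the paper handles the choice of $\theta$ by taking it ``small enough'' (below $\min\Gamma$) rather than checking $\Lambda\neq\emptyset$ explicitly, but this amounts to the same thing, and your extra paragraph on non-emptiness (which the paper leaves implicit via the standing assumption $d_u\geq 2$, forcing $K$ to be an infinite Cantor set and hence $\mathrm{GL}(K)$ infinite) is a reasonable point of care.
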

This theorem is a direct consequence of Lemma \ref{sscom}. To prove Lemma \ref{sscom}, we first prove a more general lemma for GD-attractors, which will be used later.
\begin{lemma}\label{gdcom}
Let $G=(V,E)$ be a directed graph satisfying $d_w\geq 2$ for all $w\in V$, and let $(M^u)_{u\in V}$ be a list of complete metric spaces. Suppose that for some $u\in V$, $F_{u}\subset M^u$ is the GD-attractor of some GD-IFS (based on $G$) satisfying the SSC with contraction ratio set $A$.
Then $R_{\mathrm{GL}(F_{u})}(\theta )\ \subset \
A^{\mathbb{Q}_{+}^{\ast }}$ for all $\theta \in \mathrm{GL}(F_{u})$.\end{lemma}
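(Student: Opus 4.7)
The plan is to combine the gap length decomposition \eqref{GDRSSC} with the ratio analysis Lemma \ref{GDR}. Since $F_u$ is the GD-attractor of a GD-IFS based on $G$ with the SSC and contraction ratio set $A$, equation \eqref{GDRSSC} expresses
\[
\mathrm{GL}(F_u) \;=\; \Gamma_u \,\cup\, \Lambda_u \,\cup\, \bigcup_{v\in V}\Lambda_v R_{uv},
\]
where $\Gamma_u,\Lambda_u,\Lambda_v$ are finite subsets of $(0,\infty)$ and $R_{uv}$ consists of the products $r_{\mathbf{e}} = r_{e_1}\cdots r_{e_k}$ over all directed paths $\mathbf{e}$ from $u$ to $v$. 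Each such product is a product of elements of the finite set $A \subset (0,1)$ with positive integer exponents, so $R_{uv}\subset A^{\mathbb{Z}_+^\ast}\subset A^{\mathbb{Z}_+}$.

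The next step is to rewrite $\mathrm{GL}(F_u)$ as a finite union of sets of the form $\lambda_j A_j$ with $A_j\subset A^{\mathbb{Z}_+}$, matching the hypothesis of Lemma \ref{GDR}. I would take the singleton factors $\{\gamma\} = \gamma\cdot\{1\}$ for $\gamma\in\Gamma_u$ and $\{\lambda\}=\lambda\cdot\{1\}$ for $\lambda\in\Lambda_u$, noting $\{1\}\subset A^{\mathbb{Z}_+}$; and for each pair $(v,\lambda)$ with $v\in V$ and $\lambda\in\Lambda_v$, take the factor $\lambda\cdot R_{uv}$, which is $\lambda_j A_j$ with $A_j = R_{uv}\subset A^{\mathbb{Z}_+}$ (this piece is vacuous when $R_{uv}=\emptyset$). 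Since $V$, $\Gamma_u$, $\Lambda_u$ and the $\Lambda_v$'s are all finite, this yields a \emph{finite} union of pieces of the required form.

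Applying Lemma \ref{GDR} to this decomposition of $\mathrm{GL}(F_u)$ immediately gives $R_{\mathrm{GL}(F_u)}(\theta)\subset A^{\mathbb{Q}_+^\ast}$ for every $\theta\in\mathrm{GL}(F_u)$, which is the conclusion. There is no real obstacle here, as the heavy lifting has already been done in the preliminary material: the SSC enters only through \eqref{GDRSSC}, and the ratio analysis is packaged in Lemma \ref{GDR}. The only bookkeeping item worth stressing is that infinitely many distinct path products may appear in a given $R_{uv}$, but this causes no issue since Lemma \ref{GDR} allows the $A_j$'s themselves to be infinite subsets of $A^{\mathbb{Z}_+}$; the finiteness requirement is on $A$ and on the number of pieces in the union, both of which are satisfied.
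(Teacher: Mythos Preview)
Your proof is correct and follows essentially the same approach as the paper: decompose $\mathrm{GL}(F_u)$ via \eqref{GDRSSC}, identify the elements of $\Gamma_u$, $\Lambda_u$, $\Lambda_v$ as the $\lambda_j$'s with corresponding $A_j$ equal to $\{1\}$ or $R_{uv}\subset A^{\mathbb{Z}_+}$, and apply Lemma~\ref{GDR}. Your additional remark that the $A_j$'s may be infinite while the number of pieces stays finite is a welcome clarification.
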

\begin{proof}
We apply Lemma \ref{GDR} to $\Theta=\mathrm{GL}(F_{u})$ given by (\ref{GDRSSC}), by regarding the numbers in $\Gamma_u$, $\Lambda_u$ and $\Lambda_v\ (v\in V)$ as $\lambda_j$, where the corresponding $A_j$ are $\{1\}$ and $R_{uv}$ respectively.
Since $A$ is the contraction ratio set, $r_{\mathbf{e}}\in A^{\mathbb{Z}_{+}},$ the conditions in Lemma \ref{GDR} are fulfilled, showing the desired.\end{proof}
\begin{lemma}\label{sscom}
Let $M$ be a complete metric space and $K\subset M$ be the attractor of an IFS satisfying the SSC with contraction ratio set $X$. Then for all small enough $\theta \in \mathrm{GL}(K)$,\begin{equation}
X\subset X^{\mathbb{Z}_{+}^{\ast }}\subset R_{\mathrm{GL}(K)}(\theta)\subset X^{\mathbb{Q}_{+}^{\ast }}.\label{hty}
\end{equation}\end{lemma}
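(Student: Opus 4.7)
The plan is to combine Corollary \ref{SSRSSC}, Lemma \ref{SSR}, Lemma \ref{gdcom} (applied in the single-vertex case), and the monotonicity \eqref{dand} of $R_\Theta(\theta)$ in $\Theta$. The leftmost inclusion $X \subset X^{\mathbb{Z}_+^*}$ is immediate from Definition \ref{aqz}, so the real content lies in the other two inclusions.

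First, by Corollary \ref{SSRSSC}, write $\mathrm{GL}(K) = \Gamma \cup \Lambda X^{\mathbb{Z}_+}$ with $\Gamma,\Lambda$ finite. The remark following the GD-IFS lemma identifies the splitting threshold: taking $\delta>0$ as the positive number from the SSC given there, one has $\Gamma \subset [\delta,\infty)$ and $\Lambda \subset (0,\delta)$, hence $\Lambda X^{\mathbb{Z}_+} \subset (0,\delta)$. Thus for any $\theta \in \mathrm{GL}(K)$ with $\theta < \delta$, necessarily $\theta \in \Lambda X^{\mathbb{Z}_+}$. This is the sense in which ``small enough'' should be read.

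For the rightmost inclusion $R_{\mathrm{GL}(K)}(\theta) \subset X^{\mathbb{Q}_+^*}$, I would apply Lemma \ref{gdcom} with the one-vertex graph $\#V=1$: an IFS with the SSC is a GD-IFS with the SSC, and its contraction ratio set is exactly $X$, so the conclusion of Lemma \ref{gdcom} delivers this inclusion for \emph{every} $\theta \in \mathrm{GL}(K)$, with no smallness required.

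For the middle inclusion $X^{\mathbb{Z}_+^*} \subset R_{\mathrm{GL}(K)}(\theta)$, I would apply Lemma \ref{SSR} to the subset $\Lambda X^{\mathbb{Z}_+} \subset \mathrm{GL}(K)$: since $\theta \in \Lambda X^{\mathbb{Z}_+}$ by the smallness condition, Lemma \ref{SSR} gives $X^{\mathbb{Z}_+^*} \subset R_{\Lambda X^{\mathbb{Z}_+}}(\theta)$. The inclusion $\Lambda X^{\mathbb{Z}_+} \subset \mathrm{GL}(K)$ combined with the monotonicity \eqref{dand} then upgrades this to $R_{\Lambda X^{\mathbb{Z}_+}}(\theta) \subset R_{\mathrm{GL}(K)}(\theta)$, concluding the proof.

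There is no serious obstacle here: the statement is essentially a packaging of previously established facts, and the only nontrivial point is locating $\theta$ inside the $\Lambda X^{\mathbb{Z}_+}$ part of $\mathrm{GL}(K)$, which is handled by the threshold $\delta$ recorded in the preceding remark.
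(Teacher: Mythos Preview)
Your proposal is correct and follows essentially the same approach as the paper: invoke Corollary~\ref{SSRSSC} for the decomposition, use Lemma~\ref{SSR} plus the monotonicity~\eqref{dand} for the middle inclusion, and apply Lemma~\ref{gdcom} (single-vertex case) for the rightmost one. The only cosmetic difference is that the paper identifies ``small enough'' simply as ``smaller than every element of the finite set~$\Gamma$'', whereas you trace the explicit threshold~$\delta$ from the remark; both work equally well.
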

\begin{proof}
For the left-hand side in \eqref{hty},
by Corollary \ref{SSRSSC} and our assumption, there exist two finite sets $\Lambda,\ \Gamma\subset
	(0,\infty)$ such that
	\begin{equation}
		\mathrm{GL}(K)=\Gamma\bigcup\Lambda X^{\mathbb{Z}_+}.\label{dkz}
\end{equation}
Thus by the monotonicity (\ref{dand}) and Lemma \ref{SSR}, we know that for $\theta \in \Lambda X^{\mathbb{Z}_{+}}\subset \mathrm{GL}(K),$
$$X^{\mathbb{Z}_{+}^{\ast }}\subset R_{\Lambda X^{\mathbb{Z}_{+}}}(\theta
)\subset  R_{\mathrm{GL}(K)}(\theta ),$$
showing the desired for all small enough $\theta \in \mathrm{GL}(K)$ (smaller than the numbers in the finite set $\Gamma$ in (\ref{dkz})).

The right-hand side in \eqref{hty} directly follows from Lemma \ref{gdcom}. The proof is complete.
\end{proof}
\begin{proof}[Proof of Theorem \ref{blah}] By Lemma \ref{sscom}, for all small enough $\theta \in \mathrm{GL}(K)$,\begin{equation*}
A\subset R_{\mathrm{GL}(K)}(\theta)\subset A^{\mathbb{Q}_{+}^{\ast }},
X\subset R_{\mathrm{GL}(K)}(\theta)\subset X^{\mathbb{Q}_{+}^{\ast }}.
\end{equation*}Thus $$A\subset R_{\mathrm{GL}(K)}(\theta)\subset X^{\mathbb{Q}_{+}^{\ast }},X\subset R_{\mathrm{GL}(K)}(\theta)\subset A^{\mathbb{Q}_{+}^{\ast }},$$which ends the proof.\end{proof}

\subsection{Intrinsic characterisation of algebraic dependence number}

We first generalize the definition of algebraic dependence number to GD-IFSs. Denote by $\mathrm{span}B$ the vector space generated by $B\subset \mathbb{R}$ over $\mathbb{Q}$, $\log X:=\{\log x:x\in X\}$ for a set $X\subset (0,\infty)$.
\begin{definition}[Algebraic (in)dependence number of GD-IFS] Given a GD-IFS with contraction ratio set $A$, define its {\em algebraic independence number} as the dimension of the vector space $\mathrm{span}\log A$, and its {\em algebraic dependence number} as its algebraic independence number minus 1.\end{definition}
 \begin{theorem}Suppose that $\digamma$ is a GD-IFS satisfying the SSC with GD-attractors $(F_{u}\subset
M^u)_{u\in V}$ (on complete metric spaces), based on a directed graph $(V,E)$ with $d_w\geq 2$ for all $w\in V$, then its algebraic independence number is no less than the dimension of \begin{equation}\label{yzy}\mathrm{span}\Big\{\mathrm{log}\,r:\ r\in \bigcup_{u\in V}\bigcup_{\theta\in \mathrm{GL}(F_u)}R_{\mathrm{GL}(F_u)}(\theta)\Big\}.\end{equation} In particular, for a dust-like self-similar set $K$ (on a complete metric space), its algebraic dependence number plus 1 equals the dimension of \begin{equation}\label{zxd}
  \mathrm{span}\Big\{\mathrm{log}\,r:\ r\in \bigcup_{\theta\in \mathrm{GL}(K)}R_{\mathrm{GL}(K)}(\theta)\Big\}=\mathrm{span}\{\mathrm{log}\,r: r\in R_{\mathrm{GL}(K)}(\theta)\}
\end{equation} for any small enough $\theta\in \mathrm{GL}(K)$
(the vector space generated by the logarithms of all the common ratios of infinite geometric sequences in $\mathrm{GL}(K)$ over $\mathbb{Q}$).\label{yzx}\end{theorem}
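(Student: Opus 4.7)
The proof splits naturally into two steps, mirroring the two assertions in the statement. The first (an inequality valid in the full GD-IFS generality) will follow immediately from Lemma~\ref{gdcom}. The second (equality in the self-similar case) will additionally require the lower bound $X\subset R_{\mathrm{GL}(K)}(\theta)$ supplied by Lemma~\ref{sscom}.

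For the first assertion, let $A$ denote the contraction ratio set of $\digamma$, so by definition the algebraic independence number equals $\dim_{\mathbb{Q}}\mathrm{span}(\log A)$. Lemma~\ref{gdcom} yields $R_{\mathrm{GL}(F_u)}(\theta)\subset A^{\mathbb{Q}_+^*}$ for every $u\in V$ and every $\theta\in\mathrm{GL}(F_u)$, so $\log r\in\mathrm{span}(\log A)$ for every $r$ appearing in the union in~(\ref{yzy}). Consequently the vector space in~(\ref{yzy}) is a $\mathbb{Q}$-subspace of $\mathrm{span}(\log A)$, giving the claimed dimension comparison.

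For the second assertion I would specialise to $V=\{u\}$, $K=F_u$, and let $X$ be the contraction ratio set of any SSC generating IFS of $K$. The first assertion applied to this IFS already bounds the dimension of every span in~(\ref{zxd}) above by $\dim_{\mathbb{Q}}\mathrm{span}(\log X)$, which equals the algebraic dependence number plus one. For the matching lower bound I would invoke Lemma~\ref{sscom}: for every sufficiently small $\theta\in\mathrm{GL}(K)$,
\[
X\subset R_{\mathrm{GL}(K)}(\theta)\subset X^{\mathbb{Q}_+^*},
\]
so passing to logarithms and $\mathbb{Q}$-spans sandwiches $\mathrm{span}\{\log r:r\in R_{\mathrm{GL}(K)}(\theta)\}$ between $\mathrm{span}(\log X)$ and itself. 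This identifies the right-hand span in~(\ref{zxd}) with $\mathrm{span}(\log X)$; combining this with the upper inclusion applied to \emph{every} $\theta$ shows that the ``union'' span on the left-hand side of~(\ref{zxd}) also collapses to $\mathrm{span}(\log X)$, whose $\mathbb{Q}$-dimension is precisely the algebraic dependence number plus one.

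I do not foresee a genuine obstacle: once Lemmas~\ref{gdcom} and~\ref{sscom} are in hand the argument reduces to a short piece of linear algebra over $\mathbb{Q}$. The only subtle point is the quantifier ``for any small enough $\theta$'', which is inherited verbatim from Lemma~\ref{sscom} and traces back to the exceptional finite set $\Gamma$ in the decomposition $\mathrm{GL}(K)=\Gamma\cup\Lambda X^{\mathbb{Z}_+}$ of Corollary~\ref{SSRSSC}; I would preserve this quantifier and simply record that any strict lower bound for $\Gamma$ works as a threshold.
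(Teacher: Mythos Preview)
Your proposal is correct and follows essentially the same route as the paper: the upper bound in both parts comes from Lemma~\ref{gdcom} (giving $R_{\mathrm{GL}(F_u)}(\theta)\subset A^{\mathbb{Q}_+^*}$, hence the span lies in $\mathrm{span}\log A$), and in the self-similar case the matching lower bound comes from the inclusion $X\subset R_{\mathrm{GL}(K)}(\theta)$ of Lemma~\ref{sscom}, after which the sandwich $\mathrm{span}\log X\subset\mathrm{span}B'\subset\mathrm{span}\log X^{\mathbb{Q}_+^*}=\mathrm{span}\log X$ forces equality. Your treatment of the two versions of $B'$ (the union over all $\theta$ versus a single small $\theta$) is slightly more explicit than the paper's, but the logic is identical.
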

\begin{remark}
In this theorem, ``infinite geometric sequences in $\mathrm{GL}(K)$" has the same effect as ``strictly decreasing geometric sequences in $\mathrm{GL}(K)$", since the set $\mathrm{GL}(K)$ has an upper bound {\rm diam}$(K)$, which means that the infinite geometric sequences in $\mathrm{GL}(K)$ must have common ratio no greater than 1 (but $\mathrm{log}1=0$ is useless to be a basis for vector spaces). \end{remark}
In fact we prove a stronger property \eqref{cblcd} for dust-like self-similar sets: the vector space generated by the logarithms of the contraction ratios of any SSC generating IFS (over $\mathbb{Q}$) is exactly the vector space generated by the logarithms of all the common ratios of geometric sequences in the gap length set (over $\mathbb{Q}$).

\begin{proof}
For the first inclusion, by using Lemma \ref{gdcom} and taking the logarithm,\begin{equation*}B:=\Big\{\mathrm{log}\,r:\ r\in \bigcup_{u\in V}\bigcup_{\theta\in \mathrm{GL}(F_u)}R_{\mathrm{GL}(F_u)}(\theta)\Big\}\subset \log A^{\mathbb{Q}_{+}^{\ast }},\end{equation*} where $A$ is the contraction ratio set of $\digamma$. It follows by Definition \ref{aqz} that\begin{equation*}\mathrm{span}B\subset \mathrm{span}\log A^{\mathbb{Q}_{+}^{\ast }}=\mathrm{span}\log A. \end{equation*} Then we know the dimension of $\mathrm{span}B$ is no greater than that of $\mathrm{span}\log A$, which is exactly the algebraic independence number of $\digamma$.

For the second inclusion, by using Lemma \ref{sscom} and taking the logarithm,\begin{equation*}\log X\subset B'\subset \log X^{\mathbb{Q}_{+}^{\ast }},\end{equation*}
where $X$ is the contraction ratio set of any SSC generating IFS of $K$, and $B'$ can be either $\Big\{\mathrm{log}\,r:\ r\in \bigcup_{\theta\in \mathrm{GL}(K)}R_{\mathrm{GL}(K)}(\theta)\Big\}$ or $\{\mathrm{log}\,r: r\in R_{\mathrm{GL}(K)}(\theta)\}$ for any small enough $\theta\in \mathrm{GL}(K)$. Thus \begin{equation*}\mathrm{span}\log X\subset \mathrm{span}B'\subset\mathrm{span}\log X^{\mathbb{Q}_{+}^{\ast }}. \end{equation*}
Note that by Definition \ref{aqz}, $\mathrm{span}\log X=\mathrm{span}\log X^{\mathbb{Q}_{+}^{\ast }}$,
thus\begin{equation}\mathrm{span}\log X=\mathrm{span}B'=\mathrm{span}\log X^{\mathbb{Q}_{+}^{\ast }}, \label{cblcd}\end{equation}showing \eqref{zxd}.
Since the algebraic dependence number of $K$ is the algebraic dependence number of any SSC generating IFS, that is, the dimension of $\mathrm{span}\log X$, the proof is finished.
\end{proof}

\subsection{Lower bound on the cardinality of generating IFS}
Algebraic independence number provides a natural lower bound for the cardinality of generating IFSs, so we are able to present such lower bound in terms of the fractal itself (using gap lengths).
\begin{corollary}
Let $G=(V,E)$ be a directed graph satisfying $d_w\geq 2$ for all $w\in V$, and let $(M^u)_{u\in V}$ be a list of complete metric spaces. Suppose that for some $u\in V$, $F_{u}\subset M^u$ is the GD-attractor of some GD-IFS $\digamma$ satisfying the SSC (based on $G$). Then the number of edges in $G$ is no less than the dimension of $\mathrm{span}\Big\{\mathrm{log}\,r:\ r\in \bigcup_{\theta\in \mathrm{GL}(F_u)}R_{\mathrm{GL}(F_u)}(\theta)\Big\}$.

In particular, for a dust-like self-similar set $K\subset M$ (where $M$ is a complete metric space), the cardinality of its generating IFS satisfying the SSC is no less than dimension of $$\mathrm{span}\Big\{\mathrm{log}\,r:\ r\in \bigcup_{\theta\in \mathrm{GL}(K)}R_{\mathrm{GL}(K)}(\theta)\Big\}=\mathrm{span}\{\mathrm{log}\,r: r\in R_{\mathrm{GL}(K)}(\theta)\}$$ for any small enough $\theta\in \mathrm{GL}(K)$.\label{zjd}\end{corollary}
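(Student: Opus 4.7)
The plan is to derive the corollary as a direct consequence of Theorem \ref{yzx}, using only the elementary fact that a spanning set for a vector space has at least as many elements as the dimension of that space. The key chain of inequalities that I would establish is
\[
\#E \;\geq\; \#A \;\geq\; \dim_{\mathbb{Q}}\mathrm{span}\,\log A \;\geq\; \dim_{\mathbb{Q}}\mathrm{span}\Bigl\{\log r : r\in \bigcup_{\theta\in\mathrm{GL}(F_u)}R_{\mathrm{GL}(F_u)}(\theta)\Bigr\},
\]
where $A$ denotes the contraction ratio set of $\digamma$.

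First I would justify the leftmost inequality: since $A=\{r_e : e\in E\}$ is a set and distinct edges may share the same contraction ratio, we have $\#A\leq \#E$. Next, the middle inequality is just the observation that $\log A$ is a finite spanning set of the $\mathbb{Q}$-vector space $\mathrm{span}\,\log A$, hence its cardinality bounds the dimension from above. The rightmost inequality is exactly the content of the first part of Theorem \ref{yzx}, which asserts that the algebraic independence number of $\digamma$ (i.e., $\dim_{\mathbb{Q}}\mathrm{span}\,\log A$) dominates the dimension of the gap-length span. Combining these three steps gives the desired bound on $\#E$.

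For the ``in particular'' statement about a dust-like self-similar set $K\subset M$, I would specialise to the single-vertex case $\#V=1$, where the number of edges is precisely the cardinality of the generating IFS. The only additional input needed is the stronger conclusion in the second part of Theorem \ref{yzx}, which upgrades the rightmost inequality above to an equality: the algebraic dependence number plus $1$ equals the dimension of $\mathrm{span}\{\log r : r\in R_{\mathrm{GL}(K)}(\theta)\}$ for any sufficiently small $\theta\in\mathrm{GL}(K)$, and this in turn equals the dimension of the larger span taken over all $\theta\in\mathrm{GL}(K)$. Substituting into the chain of inequalities yields the lower bound on the cardinality of the generating IFS.

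There is no real obstacle here: every nontrivial ingredient, namely the passage from contraction ratios to gap-length ratio analysis, has already been packaged into Theorem \ref{yzx}. The only things to be careful about are (i) not confusing the contraction ratio \emph{set} $A$ with the indexed family $\{r_e\}_{e\in E}$ (so that the $\#E\geq \#A$ step is stated correctly), and (ii) invoking the ``in particular'' clause of Theorem \ref{yzx} to obtain the equality of the two gap-length spans appearing in the self-similar case.
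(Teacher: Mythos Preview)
Your proposal is correct and mirrors the paper's proof almost exactly: the paper likewise chains $\#E\geq \#A\geq \dim_{\mathbb{Q}}\mathrm{span}\,\log A$ and then invokes Theorem~\ref{yzx}. The only imprecision is that the first part of Theorem~\ref{yzx} bounds the span taken over \emph{all} vertices $\bigcup_{u\in V}$, not a single $u$; the paper closes this (trivial) gap by noting the containment of the single-$u$ span in the all-$u$ span, which you should also state explicitly.
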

\begin{proof}
  For the first claim, just note that $\#E$, the number of edges in $G$, which is also the cardinality of contracting similarities, is no less than the cardinality of the contraction ratio set of $\digamma$, and thus no less than the algebraic independence number of $\digamma$. The first claim then follows by Theorem \ref{yzx} and that $\mathrm{span}\Big\{\mathrm{log}\,r:\ r\in \bigcup_{\theta\in \mathrm{GL}(F_u)}R_{\mathrm {GL}(F_u)}(\theta)\Big\}\subset \mathrm{span}\Big\{\mathrm{log}\,r:\ r\in \bigcup_{u\in V}\bigcup_{\theta\in \mathrm{GL}(F_u)}R_{\mathrm{GL}(F_u)}(\theta) \Big\}$.

  The second claim immediately follows from the first claim and \eqref{zxd}.
\end{proof}
\begin{remark}We delete the union ``$\bigcup_{u\in V}$'' in \eqref{yzy}, since we only have the information of one attractor at one vertex $u$. In addition, we only give a lower bound on the number of edges, not of vertices.\end{remark}

Furthermore, if the dust-like self-similar set is on $\mathbb{R}$ and is full-measure, we can remove the separation condition (SSC) on the generating IFS.
We say that a nonempty compact set $K$ is {\em full-measure} if
$
\mathcal{H}^{\mathrm{dim_{H}}K}(K)=(\mathrm{diam}(K))^{\mathrm{dim_{H}}K}
$ (where $\mathrm{dim_{H}}$ is the Hausdorff dimension).

\begin{corollary}Suppose that a full-measure set $K\subset \mathbb{R}$ is the attractor of an IFS $\Phi $ satisfying the SSC, then the cardinality of every generating IFS of $K$ is no less than the dimension of $\mathrm{span}\Big\{\mathrm{log}\,r:\ r\in \bigcup_{\theta\in \mathrm{GL}(K)}R_{\mathrm{GL}(K)}(\theta)\Big\}$. If further, the logarithm of the contraction ratio of each contracting similarity in $\Phi$ is different from each other and linearly independent over $\mathbb{Q}$, then $\Phi$ has the minimal cardinality among all generating IFSs of $K$.\label{zyd}\end{corollary}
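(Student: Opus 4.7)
The plan is to combine the subordination structure from \cite[Theorem 4.1]{FengWang2009} with Corollary \ref{zjd}. Let $\Psi = \{T_j\}_{j=1}^{k}$ be an arbitrary generating IFS of $K$, and let $\Phi = \{S_1,\ldots,S_m\}$ be the fixed SSC generating IFS with contraction ratio set $X = \{r_1,\ldots,r_m\}$. Write $D$ for the dimension appearing in the statement.

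First I would invoke \cite[Theorem 4.1]{FengWang2009}, which, under the full-measure hypothesis on $K\subset\mathbb{R}$, says that every generating IFS of $K$ is subordinate to $\Phi$: each $T_j$ equals a finite composition $S_{\mathbf{i}(j)} := S_{i_1(j)} \circ \cdots \circ S_{i_{k_j}(j)}$ for some word $\mathbf{i}(j)$ in the alphabet $\{1,\ldots,m\}$. This is the step that replaces the SSC assumption on $\Psi$ by the full-measure assumption on $K$, and I would use it as a black box.

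Next I would observe that the family of words $\{\mathbf{i}(j)\}_{j=1}^{k}$ must form a covering antichain of the code tree $\{1,\ldots,m\}^{\mathbb{N}}$: the SSC on $\Phi$ gives a one-to-one coding of points of $K$ by infinite words, and the identity $K=\bigcup_{j}S_{\mathbf{i}(j)}(K)$ forces every infinite word to have some $\mathbf{i}(j)$ as a prefix. In particular, for each initial symbol $i\in\{1,\ldots,m\}$, the constant word $(i,i,\ldots)$ must be covered by some $\mathbf{i}(j)$ beginning with $i$. Two words with different initial letters $i\neq i'$ produce similarities whose images lie in the disjoint sets $S_i(K)$ and $S_{i'}(K)$ (by the SSC of $\Phi$), hence correspond to distinct $T_j$'s. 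This yields $k\geq m$. Combining with $m=\#\Phi\geq D$ from Corollary \ref{zjd} applied to $\Phi$, I conclude $\#\Psi\geq m\geq D$, proving the first claim.

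For the second claim, the additional hypothesis that $\{\log r_i\}_{i=1}^{m}$ are distinct and $\mathbb{Q}$-linearly independent forces $\#\Phi = m = \dim_{\mathbb{Q}}\mathrm{span}\log X$, and by Theorem \ref{yzx} together with \eqref{cblcd} this equals $D$. The first claim then gives $\#\Psi\geq D=\#\Phi$ for every generating IFS $\Psi$ of $K$, so $\Phi$ attains the minimum cardinality. The main obstacle is the correct invocation of \cite[Theorem 4.1]{FengWang2009}; once the subordination is granted, the argument reduces to a transparent antichain counting combined with previously established results.
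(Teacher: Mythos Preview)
Your argument has a genuine gap in the invocation of \cite[Theorem 4.1]{FengWang2009}. You assert that under the full-measure hypothesis every generating IFS of $K$ is subordinate to the \emph{given} SSC system $\Phi$, i.e.\ that each $T_j$ is a finite composition of the $S_i$. This is false for arbitrary SSC $\Phi$. Take $K$ to be the middle-third Cantor set (which is full-measure), let $\Phi$ be the second-iterate system $\{x/9,\ x/9+2/9,\ x/9+2/3,\ x/9+8/9\}$ (which satisfies the SSC and generates $K$), and let $\Psi=\{x/3,\ x/3+2/3\}$. The maps in $\Psi$ have ratio $1/3$, which is not a power of $1/9$, so they are not compositions from $\Phi$; subordination to $\Phi$ fails. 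Your covering argument would then force $\#\Psi\ge\#\Phi$, i.e.\ $2\ge 4$. This also explains why the second assertion of the corollary genuinely needs the extra hypothesis on $\Phi$: your route would deliver it for free, which cannot be right.

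The paper avoids this by using only the weaker \emph{nesting} claim extracted from the proof of \cite[Theorem 4.1]{FengWang2009}: under full-measure, if two images $T_i(K)$, $T_j(K)$ from an arbitrary generating IFS meet, then one is contained in the other. Iteratively discarding the smaller image produces an SSC sub-IFS $\Psi'\subset\Psi$ that still generates $K$, and Corollary~\ref{zjd} applied to $\Psi'$ gives $\#\Psi\ge\#\Psi'\ge D$. For the second assertion the paper uses the extra hypothesis to identify $\#\Phi$ with the algebraic independence number of $\Phi$, which equals that of $\Psi'$ by \cite[Theorem 5.7]{Elekes2010} (equivalently, equals $D$ by Theorem~\ref{yzx}); hence $\#\Psi\ge\#\Psi'\ge\#\Phi$.
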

\begin{proof}For any generating IFS $\{S_{i}\}_{i=1}^{m}$ of $K$, there exists $I\in \{1,2,\cdots,m\}$ such that, the sub-IFS $\Psi:=\{S_{i}\}_{i\in I}$ also has attractor $K$ and satisfies the SSC, and \begin{equation}
                                               m\geq \#\Psi.\label{llm}
                                             \end{equation} Indeed, whenever $S_i(K)\bigcap S_j(K)\neq\emptyset$, by \cite[The claim in the proof of Theorem 4.1]{FengWang2009}, $S_i(K)\subset S_j(K)$ (or $S_i(K)\supset S_j(K)$), so one can remove $i$ (or $j$) from $\{1,2,\cdots,m\}$, and repeat this process until the SSC is satisfied. The first inclusion then follows by using Corollary \ref{zjd} for the lower bound of $\#\Psi $, which is also a lower bound for $m$ by \eqref{llm}.

For the second inclusion, by assumption, the algebraic independence number of $\Phi $ is exactly its cardinality $\#\Phi $, thus the algebraic independence number of $\Psi $ is also $\#\Phi $ by \cite[Theorem 5.7]{Elekes2010}. Since the cardinality of $\Psi $ is no less than the algebraic independence number of $\Psi $, which is $\#\Phi $, we obtain $ m\geq \#\Phi$ by \eqref{llm}, showing the desired.\end{proof}

The full-measure condition is easy to verify by the similarities of an IFS on $\mathbb{R}$, see \cite[Remark 4.1]{FengWang2009} for a practical sufficient and necessary condition and examples.
We end our paper by a brief overview on the full-measure condition. It is firstly discussed in Hausdorff's paper \cite{Hausdorff}, then extended by Marion (called `perfect isotopic') \cite{Marion1985,Marion1986}, and independently by Ayer and Strichartz \cite{AyerStr1999} on self-similar sets (several years later): they give the full-measure criterion for dust-like self-similar sets on $\mathbb{R}$. There is no known analogous criterion in higher dimensional case to the best of our knowledge. The study of full-measure condition for a GD-attractor is more complicated than that for self-similar set (see \cite[Theorem 4.6]{BooreFal2013}), and there is no known analogous criterion even on $\mathbb{R}$ to the best of our knowledge.

\begin{acknowledgement}
The author thanks Kenneth Falconer for valuable suggestions.
\end{acknowledgement}


\end{document}